\documentclass[11pt, reqno]{amsart}
\usepackage{amsmath, amsfonts, amsthm, amssymb, multicol, mathtools, dsfont, mathrsfs}
\usepackage{graphicx}
\usepackage{float, hyperref}
\usepackage{scalerel,stackengine, subcaption}
\usepackage[usenames,dvipsnames,x11names]{xcolor}
\usepackage{enumitem}
\usepackage{pgfplots}
\usepgfplotslibrary{fillbetween}
\pgfplotsset{width=10cm,compat=1.9}
\usepackage[square,sort,comma,numbers]{natbib}
\allowdisplaybreaks

\makeatletter
\g@addto@macro\bfseries{\boldmath}
\makeatother

\makeatletter
\def\@setauthors{%
  \begingroup
  \def\thanks{\protect\thanks@warning}%
  \trivlist
  \centering\footnotesize \@topsep30\p@\relax
  \advance\@topsep by -\baselineskip
  \item\relax
  \author@andify\authors
  \def\\{\protect\linebreak}

  \normalsize\lowercase{\authors}%
  
	\ifx\@empty\contribs
  \else
    ,\penalty-3 \space \@setcontribs
    \@closetoccontribs
  \fi
  \endtrivlist
  \endgroup
}
\def\@settitle{\begin{center}
\LARGE\lowercase{\@title}
  \end{center}%
}
\makeatother
\newcommand{\authoremail}[1]{\email{\href{mailto:#1}{\color{lightblue}{#1}}}}
\newcommand{\authoraddress}[1]{\address{\normalfont{#1}}}

\numberwithin{equation}{section}

\newtheorem{thm}{Theorem}[section]
\newtheorem{lma}[thm]{Lemma}
\newtheorem{cor}[thm]{Corollary}

\newtheorem{prop}[thm]{Proposition}

\renewcommand{\epsilon}{\varepsilon}

\newcommand{\rd}{\mathbb{R}^d}

\renewcommand{\geq}{\geqslant}
\renewcommand{\leq}{\leqslant}

\newcommand{\lbd}{\underline{\dim}_{\textup{B}}}
\newcommand{\hd}{\dim_{\textup{H}}}
\newcommand{\frd}{\dim_{\textup{Fr}}}

\newcommand{\fs}{\dim^\theta_{\mathrm{F}}}
\newcommand{\fd}{\dim_{\mathrm{F}}}
\newcommand{\sd}{\dim_{\mathrm{S}}}

\newcommand{\R}{\mathbb{R}}
\newcommand{\E}{\mathbb{E}}

\newcommand{\C}{\mathbb{C}}

\newcommand{\spt}{\text{spt}\,}

\newcommand{\J}{\mathcal{J}}

\DeclareMathOperator*{\argmax}{arg\,max}

\makeatletter
\DeclareRobustCommand\widecheck[1]{{\mathpalette\@widecheck{#1}}}
\def\@widecheck#1#2{%
    \setbox\z@\hbox{\m@th$#1#2$}%
    \setbox\tw@\hbox{\m@th$#1%
       \widehat{%
          \vrule\@width\z@\@height\ht\z@
          \vrule\@height\z@\@width\wd\z@}$}%
    \dp\tw@-\ht\z@
    \@tempdima\ht\z@ \advance\@tempdima2\ht\tw@ \divide\@tempdima\thr@@
    \setbox\tw@\hbox{%
       \raise\@tempdima\hbox{\scalebox{1}[-1]{\lower\@tempdima\box
\tw@}}}%
    {\ooalign{\box\tw@ \cr \box\z@}}}
\makeatother

\stackMath
\newcommand\reallywidehat[1]{%
\savestack{\tmpbox}{\stretchto{%
  \scaleto{%
    \scalerel*[\widthof{\ensuremath{#1}}]{\kern.1pt\mathchar"0362\kern.1pt}%
    {\rule{0ex}{\textheight}}
  }{\textheight}%
}{2.4ex}}%
\stackon[-6.9pt]{#1}{\tmpbox}%
}
\definecolor{lightblue}{HTML}{2B77A4}
\colorlet{plotblue}{LightSkyBlue3!80}
\definecolor{darkred}{HTML}{9E0D0D}
\definecolor{purp}{HTML}{d603a9}
\definecolor{dartmouthgreen}{HTML}{00A64F}
\definecolor{Junglegreen}{HTML}{00A99A}
\definecolor{yellowcolour}{HTML}{f07c02}
\hypersetup{
	colorlinks=true,
	linkcolor=darkred,
	urlcolor=darkred,
	citecolor=lightblue
}
\newcommand\numberthis{\addtocounter{equation}{1}\tag{\theequation}}

\title{Fourier  restriction  estimates based on $L^q$-dimensions:\\ beyond Stein--Tomas}

\usepackage{fancyhdr}
\author{Marc Carnovale}
\authoraddress{Marc Carnovale}
\authoremail{marc.carnovale@gmail.com}

\author{Jonathan M. Fraser}
\authoraddress{Jonathan M. Fraser, University of St Andrews, Scotland}
\authoremail{jmf32@st-andrews.ac.uk}
\thanks{JMF was  financially supported by a  \emph{Leverhulme Trust Research Project Grant} (RPG-2023-281)  and an \emph{EPSRC Open Fellowship} (EP/Z533440/1).}

\author{Ana E. de Orellana}
\authoraddress{A. E. de Orellana, University of St Andrews, Scotland}
\authoremail{aedo1@st-andrews.ac.uk}
\thanks{AEdO was financially supported by the University of St Andrews.}

\date{}

\begin{document}
\thispagestyle{empty}

\begin{abstract}
  The well-known Stein--Tomas restriction theorem gives the sharp  range of $p$ for which $L^p\to L^2$ restriction estimates hold for the surface measure on the sphere. This was generalised to arbitrary measures satisfying certain Fourier decay and Frostman conditions by Mockenhaupt, Mitsis, and Bak--Seeger, with the most general version now a fundamental result in harmonic analysis. The Frostman condition essentially asks for uniform control on the measure of small balls and is the endpoint  of a continuum of more nuanced conditions which describe the local fluctuations of the measure. This analysis  gives rise to the  $L^q$-dimensions of a measure and these are a central concept in fractal geometry and a crucial tool in multifractal analysis and the theory of large deviations. In this paper we prove a new Fourier restriction theorem which uses the $L^q$-dimensions instead of the Frostman condition, thus providing a continuum of estimates which recover Stein--Tomas at the endpoint. Our proof gives the endpoint estimate for all values of $q\in(1,\infty]$ via Stein's complex interpolation. In particular, in the case $q=\infty$ this partially resolves    a  question raised by Bak and Seeger. We explore when our theorem improves on Stein--Tomas, that is, when the range is not optimised at $q=\infty$, and show that this is the case quite generally, including for certain  Mandelbrot cascade measures and measures with multifractal behaviour. On the way to proving our main theorem we obtain a novel description of the $L^q$-dimensions based on certain convolution norms, which may be of interest in its own right.  \\ \\
  \emph{Mathematics Subject Classification}: primary: 42B10, 28A80; secondary: 42B20, 28A75, 28A78.
\\
\emph{Key words and phrases}:  restriction problem, Fourier restriction, Fourier transform, Fourier dimension, Fourier spectrum, Frostman dimension, $L^q$-dimension, $L^q$ spectrum.
\end{abstract}
\maketitle
\tableofcontents

\section{Introduction}

\subsection{The restriction problem} \label{sec:restrictionIntro}

An important  question in harmonic  analysis---and the fundamental question in Fourier restriction theory---asks whether we can meaningfully restrict the Fourier transform of a function to the support of a singular measure $\mu$ on $\rd$.  More precisely, given a non-zero, finite, compactly supported, Borel measure $\mu$ on $\rd$, for which $p,r\in[1,\infty]$ does the estimate 
\begin{equation}\label{eq:restriction}
    \|\widehat{f}\|_{L^{r'}(\mu)} \lesssim \|f\|_{L^{p'}(\rd)},
\end{equation}
hold for all $f\in L^{p'}(\rd)$? Here we use the notation $A\lesssim B$ if there exists a uniform constant $c\geq1$ such that $A\leq cB$, and $A\approx B$ if $A\lesssim B$ and $B\lesssim A$. If the constant $c$ depends on some parameter $\lambda$ we wish to emphasise, it will be denoted with a subscript as $\lesssim_{\lambda}$, or $\approx_{\lambda}$. Also, for   $p,r\in[1,\infty]$, we write $p'$ and $r'$ to refer to their conjugate exponents, i.e. they will satisfy $\frac{1}{p}+ \frac{1}{p'} = \frac{1}{r} + \frac{1}{r'} = 1$. 

By duality of $L^p$ spaces, \eqref{eq:restriction} is equivalent to
\begin{equation*}
    \|\widehat{f\mu}\|_{L^{p}(\rd)}\lesssim \|f\|_{L^r(\mu)}
\end{equation*}
with \eqref{eq:restriction} referred to as an $L^{p'} \to L^{r'}$ restriction estimate and the dual form as  an $L^{r} \to L^{p}$ extension estimate. What is more, if $p = 2$, both restriction and extension estimates are equivalent to
\begin{equation}\label{eq:extension}
    \|\widehat{\mu}*f\|_{L^p(\rd)}\lesssim \|f\|_{L^{p'}(\rd)},
\end{equation}
which we will refer to as an $L^{p'}\to L^p$ $TT^*$ estimate, since $\widehat{\mu}*f$ is the $TT^*$ operator of the restriction operator $T$.

We suppose from now on that $\mu$ is singular.  If  $p'=1$, $\widehat{f}$ is continuous and estimates of the form \eqref{eq:restriction} will be possible for all $r'$. However, since the Fourier transform is an $L^2(\rd)$ isometry, for $p'= 2$, $\widehat{f}\in L^2(\rd)$ and there is no sensible way to restrict it to a set of Lebesgue measure zero. Thus, the question is only  interesting for the range $1< p'<2$ (equivalently, $p>2$).


\subsection{Stein--Tomas and Mockenhaupt--Mitsis--Bak--Seeger restriction}

When the support of $\mu$ is a smooth manifold, Stein observed that non-trivial estimates were often possible when the manifold was curved, however \eqref{eq:restriction} could not hold for $p'<\infty$ when the manifold was flat. This curvature is captured by Fourier decay of $\mu$, and is one of the principal hypotheses in the Stein--Tomas restriction theorem. Although the name Stein--Tomas refers to work by Stein (see e.g. \cite{Ste93}) and Tomas \cite{Tom75a,Tom75b}, their result was only for the surface measure on the sphere, $\mu=\sigma^{d-1}$. It was later on generalised by Mockenhaupt \cite{Moc00} who considered some specific measures built by Salem \cite{Sal51}. Although specific, his proof already contained all the ideas needed for the proof of Mitsis \cite{Mit02} which regards general singular measures satisfying polynomial Fourier decay and a Frostman condition. More recently still, Bak--Seeger \cite{BS11} proved that the range given by Mitsis could be improved to obtain the endpoint estimate as well. Their theorems consider the case $r=2$ and are as follows: If $\alpha>0$ and $\beta>0$ are such that
\begin{equation}\label{eq:FrostmanExp}
  \mu(B(x,r)) \lesssim r^\alpha
\end{equation}
for all $x \in \rd$ and $r>0$, and 
\begin{equation}\label{eq:FourierDecay}
  \big|\widehat{\mu}(\xi) \big|^2 \lesssim |\xi|^{-\beta} 
\end{equation}
for all $\xi \in \rd$, then for all $f\in L^2(\mu)$, the estimate
\begin{equation}\label{eq:restrictionr2}
    \|\widehat{f\mu}\|_{L^p(\rd)}\lesssim \|f\|_{L^2(\mu)}
\end{equation}
holds for all
\begin{equation}\label{eq:SteinTomas}
p \geq 2+4 \frac{d-\alpha}{\beta}.
\end{equation}
This   estimate is a cornerstone of Fourier restriction theory and is remarkable in the sense that a non-trivial range can be produced from only basic information about the measure, namely the Frostman condition \eqref{eq:FrostmanExp} and the Fourier decay condition \eqref{eq:FourierDecay}.

Recall that the corresponding restriction estimate for the sphere is known to be sharp by the Knapp example. Here  the idea is to build a function that captures the flatness of the sphere by taking it to be the indicator function of a small cap. In the fractal case, this flatness can be replaced by  considering arithmetic structure within the fractal. In \cite{HL13,HL16,Che16} the general idea is to choose an indicator function on a neighbourhood of an arithmetic progression. More recently, in \cite{FHR25,FHR25+} the authors consider a Salem set defined by Kaufmann \cite{Kau81}, which also contains long arithmetic progressions, showing that \eqref{eq:SteinTomas} is sharp in all dimensions $d\geq1$ for all $0<\alpha,\beta<d$. However, these measures that show sharpness the Mockenhaupt--Mitsis--Bak--Seeger estimate are all very regular, being supported on Salem sets, or on sets containing long arithmetic progressions (see Proposition~\ref{prop:LqdimensionsNorm}). 


\subsection{The $L^q$-dimensions and beyond Stein--Tomas}

The Frostman condition \eqref{eq:FrostmanExp} asks for uniform control  on the measure of balls.  The supremum of $\alpha$ for which \eqref{eq:FrostmanExp} holds is known as the Frostman dimension of $\mu$ and is an important quantity describing the fine scale geometry of the measure.  For many of  the classic examples in Fourier restriction, including surface measures on smooth manifolds, much more can be said.  In fact, \eqref{eq:FrostmanExp} can be replaced by
 \[
 \mu(B(x,r)) \approx  r^\alpha
 \]
 where $\alpha$ is the dimension of the support, for example the dimension of the manifold. That is, all balls carry roughly the same mass and so the measure is very uniformly distributed.  This is part of the reason why the Stein--Tomas estimate works so well for these examples. However, in general---and especially for measures exhibiting multifractality---we should not expect such uniformity.  There may be wild fluctuations in $\mu(B(x,r))$  and the Frostman dimension will not see these fluctuations; it only sees the extreme cases. The $L^q$-dimensions, defined later in Section \ref{sec:dimensions}, provide a more nuanced description of the local fluctuations of $\mu$ and are a central concept in fractal geometry, see for example \cite{falconer, HK97, Ols95, Pes93}, and play a crucial role in multifractal analysis and the theory of large deviations.  Roughly speaking, they describe the $q$-moments of the measure and recover the Frostman dimension at $q=\infty$ and many other notable notions of fractal dimension at other specific values.  As such, it is very natural to ask if the $L^q$-dimensions can be used to prove Stein--Tomas style restriction estimates and it is perhaps surprising that the marriage of fundamental concepts in harmonic analysis and fractal geometry seems to have been overlooked until now.   In this paper we establish such estimates (see Corollary \ref{cor:casetheta0}) and explore conditions under which they strictly improve on the Stein--Tomas estimate \eqref{eq:SteinTomas}.  We show that this happens quite often for measures exhibiting multifractal behaviour including certain Mandelbrot cascade measures (see Theorem \ref{cascade}), which are a fundamental model in probabilistic fractal geometry and a basic model for fluid turbulence. On the way to proving our main theorem, we are led to a novel characterisation of the $L^q$-dimensions in terms of certain convolution norms and this may be interesting in its own right (see Proposition \ref{prop:LqdimensionsNorm}). 
 
Just as the Frostman condition \eqref{eq:FrostmanExp} describes uniform control of the measure of small balls, the Fourier decay condition \eqref{eq:FourierDecay} asks for uniform control on the decay of the Fourier transform with the optimal $\beta$ defining the Fourier dimension of the measure.  The Fourier spectrum, defined in Section \ref{sec:FourierSpectrum} is a continuum of dimensions which provide a more nuanced description of the decay of the Fourier transform and recovers the Fourier dimension at one endpoint.  We also incorporate this into our restriction estimate (see Theorem \ref{thm:mainthm}) and setting $q=\infty$ this recovers, and provides a new proof of, the restriction theorem based on the Fourier spectrum established in \cite{CFdO25}.

\section{Preliminaries}\label{sec:prelim}

\subsection{The $L^q$-dimensions}\label{sec:dimensions}


Throughout the article we will work with non-zero, finite, compactly supported, Borel measures on $\rd$. We now give a description of a family of dimensions known as the $L^q$-dimensions. For more details we refer the readers to \cite{falconer, Ols95}.

Given $0<\alpha\leq d$, recall  a  measure $\mu$ satisfies the Frostman condition \eqref{eq:FrostmanExp} with exponent $\alpha$ if for all $r>0$ and $x\in\rd$, $\mu( B(x,r) )\lesssim r^\alpha$. If a measure $\mu$ satisfies this condition, its $\beta$-energy is finite for all $\beta<\alpha$, that is, 
\begin{equation*}
    I_{\beta}(\mu)\coloneqq \iint|x-y|^{-\beta}\,d\mu(x)\,d\mu(y)< \infty.
\end{equation*}
We define the Frostman and Sobolev dimensions of a measure as
\begin{equation*}
    \frd\mu = \sup\big\{ \alpha\in\R : \mu( B(x,r) )\lesssim r^\alpha,~~\forall r>0,x\in\rd \big\};
\end{equation*}
and
\begin{equation*}
    \sd\mu = \sup\{ \beta\in\R : \int_{\rd}\big| \widehat{\mu}(\xi) \big|^{2} |\xi|^{\beta-d}\,d\xi<\infty \},
\end{equation*}
respectively, noting that the above observation yields $\frd\mu\leq \sd\mu$.  Moreover, when $\mu$ is supported on a null set $ \sd\mu \leq d$, in which case the Sobolev dimension is also called the energy dimension, the correlation dimension, or the $L^2$ dimension. Further,  the Hausdorff dimension of a Borel set $X\subseteq\rd$ is the supremum of $\min\{ d,\sd\mu \}$, taken over all $\mu$ supported on $X$.

The Frostman (or $L^\infty$) dimension and Sobolev (or $L^2$) dimension are part of a more general family of dimensions that capture local fluctuations  of $\mu$. Given a compactly supported Borel probability measure $\mu$ on $\mathbb{R}^d$ and $q \geq 0$, the (lower) $L^q$-dimension of $\mu$ is defined by
\begin{equation}\label{eq:defLq}
  \tau_\mu(q) = \liminf_{r \to 0} \frac{\log \sum_{Q \in \mathcal{Q}_r} \mu(Q)^q}{(q-1)\log r}\text{ for }q\neq1;\text{ and } \tau_{\mu}(1) = \liminf_{r\to0}\frac{\sum_{Q\in\mathcal{Q}_{r}}\mu(Q)\log\mu(Q)}{\log r}
\end{equation}
where $\mathcal{Q}_r$ is the collection of $r$-cubes in an $r$-mesh oriented with the coordinate axes.  The corresponding upper $L^q$-dimensions are defined by replacing the $\liminf$ with   $\limsup$, but we will be more concerned with the lower version. The $L^q$-dimensions are non-increasing in $q\geq0$ and continuous except perhaps at $q=1$.  In many cases of interest the upper and lower $L^q$-dimensions coincide and are continuous at $q=1$, see \cite{feng,peressolomyak}.  There are many equivalent definitions of the $L^q$-dimensions, see e.g.~\cite{HK97,Pes93} and it is also possible to extend the definition to include negative $q$, but we will not use that here. In Proposition~\ref{prop:LqdimensionsNorm} we provide  a characterisation in terms of $L^q$ norms of convolutions of the measure with a Riesz kernel, which is particularly suited to restriction estimates.

The $L^q$-dimensions encode a lot of useful geometric information about $\mu$, including recovering several well-known notions of dimension as special cases, see \cite{ngai}.  For example, we have already seen
\[
\sd \mu = \tau_\mu(2)
\]
\[
\dim_\textup{Fr} \mu = \lim_{q \to \infty} \tau_\mu(q) =: \tau_\mu(\infty),
\]
but we also have
\[
\lbd \spt(\mu) = \tau_\mu(0)
\]
where $\lbd$ denotes the lower box dimension and $\tau_\mu(1)$ is the lower entropy dimension and, provided $\tau_\mu$ is continuous at 1, then   
\[
\tau_\mu(1) = \hd \mu.
\]
If  $\tau_\mu$ is continuous and coincides with the upper $L^q$-dimensions at $q=1$,  then $\mu$ is exact dimensional and $\tau_\mu(1) $ also gives the upper Hausdorff dimension and the (upper and lower) packing dimension of $\mu$. Further, the Legendre transform of $\mu$ is an upper bound for the fine and coarse multifractal spectra of $\mu$ and for measures satisfying the so-called multifractal formalism the Legendre transform of $\tau_\mu$ gives the multifractal spectra precisely, see \cite{Ols95}. As such, the $L^q$-dimensions are a central object of study in fractal geometry and have received sustained attention in the literature, see for example \cite{barralfeng,almost,shmerkin}. They are widely used to describe the multifractal behaviour of measures, especially useful for measures with widely varying intensity, see for example Figure~\ref{fig:multifractal}. Typical examples of such measures are the self-similar measures on middle third Cantor set and the Sierpi\'nski carpet built with not all-equal probabilities, Mandelbrot cascades (see Section~\ref{sec:cascade}), and many others.  Until recently, the $L^q$-dimensions have not received much attention in the harmonic analysis literature, perhaps because the classical examples (such as surface measures) tend to be uniformly distributed.  That said, the $L^q$-dimensions have recently seen applications in Fourier restriction theory in the multilinear setting, see \cite{OdO26+}.  

 Given $\alpha>\tau_{\mu}(\infty)$ we can always find a cube $Q'\in\mathcal{Q}_{r}$ with $r$ arbitrarily small such that $\mu(Q')\gtrsim r^\alpha$. Therefore, for $q>1$ and a sequence of  $r \to 0$, 
\begin{equation*}
    \frac{\log \sum_{Q \in \mathcal{Q}_r} \mu(Q)^q}{(q-1)\log r} \leq \frac{\log\mu(Q')^q}{(q-1)\log r} \leq \frac{q\alpha}{(q-1)}.
\end{equation*}
Letting $\alpha\to\tau_{\mu}(\infty)$ and $r\to0$ we obtain the following elementary  bound:  
\begin{equation}\label{eq:boundLqdimensions}
    \tau_{\mu}(q) \leq \frac{q}{q-1}\tau_{\mu}(\infty).
\end{equation}
However, we show that those measures for which our main theorem does not improve the Stein--Tomas estimate \eqref{eq:SteinTomas} must satisfy a more restrictive  upper bound than \eqref{eq:boundLqdimensions}, see  Corollary~\ref{improve} and the discussion thereafter.

\begin{figure}[H]
  \includegraphics[scale=0.45]{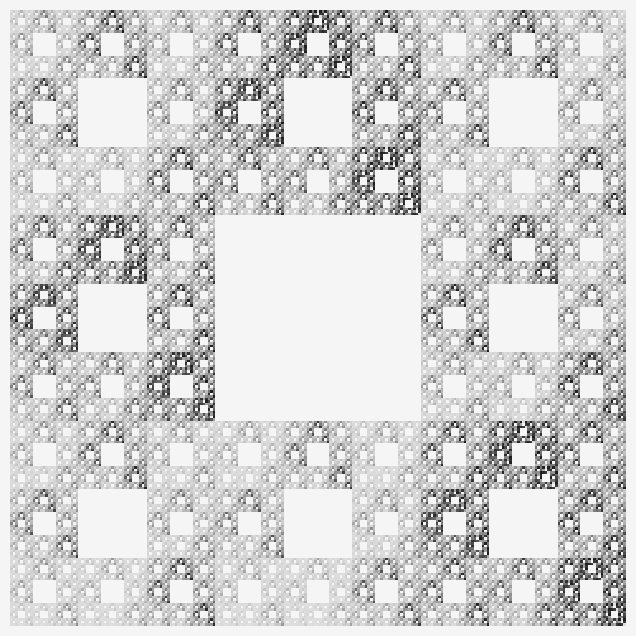}
  \caption{A multifractal measure supported on the Sierpi\'nski carpet. The dark regions represent high concentration of mass and the light regions low concentration of mass.}\label{fig:multifractal}
\end{figure}

\subsection{Dimensions given by the Fourier transform}\label{sec:FourierSpectrum}

Given a function $f:\rd\to\C$, $f\in L^1(\rd)$, its Fourier transform is defined as
\begin{equation*}
    \widehat{f}(\xi) = \int_{\rd} e^{-2\pi i \xi\cdot x}f(x)\,dx,
\end{equation*}
and this operator can be easily extended to functions in $L^p$ for $p\in[1,2]$.  We refer the reader to \cite{grafakos,Mat15,Ste93} for more background on Fourier analysis in general.  The Fourier transform of a measure $\mu$ is
\begin{equation*}
    \widehat{\mu}(\xi) = \int_{\rd}e^{-2\pi i \xi\cdot x}\,d\mu(x).
\end{equation*}
For $0< \beta< d$, by Parseval's theorem and the fact that the Riesz kernel $\kappa_{\beta}(\xi)\coloneqq |\xi|^{-\beta}$ satisfies that $\widehat{\kappa_{\beta}}(\xi) = \kappa_{d-\beta}(\xi)$ in the distributional sense,
\begin{equation}\label{eq:senergy}
    I_{\beta}(\mu)\approx_{d,\beta} \int_{\rd}\big| \widehat{\mu}(\xi) \big|^{2} |\xi|^{\beta-d}\,d\xi.
\end{equation}
From this it is straightforward to see that if $\big| \widehat{\mu}(\xi) \big|\lesssim |\xi|^{-\frac{\beta}{2}}$ decays polynomially, then $I_{\beta'}(\mu)$ will be finite for all $\beta'<\beta$. The supremum of the polynomial rate of decay of $\big| \widehat{\mu}(\xi)\big|$ is the Fourier dimension of $\mu$, that is,
\begin{equation*}
    \fd\mu = \sup\big\{ \beta\in\R : \sup_{\xi\in\rd}\big| \widehat{\mu}(\xi) \big|^2 |\xi|^{\beta}<\infty \big\}.
\end{equation*}
This connection between the energies $I_\beta(\mu)$ and Fourier decay opens up a rich interplay between fractal geometry and Fourier analysis.  In \cite{Fra24} the author generalised the $\beta$-energies to arrive at a more comprehensive  description  of the Fourier analytic behaviour of $\mu$. For $\theta\in(0,1]$, define the $(\beta,\theta)$-energy of a measure $\mu$ as
\begin{equation*}
    \J_{\beta,\theta}(\mu)\coloneqq \bigg(\int_{\rd}\big| \widehat{\mu}(\xi) \big|^{\frac{2}{\theta}}|\xi|^{\frac{\beta}{\theta}-d}\,d\xi\bigg)^\theta
\end{equation*}
and for $\theta = 0$,
\begin{equation*}
    \J_{\beta,0}(\mu) = \sup_{\xi\in\rd}\big| \widehat{\mu}(\xi) \big|^{2}|\xi|^{\beta}.
\end{equation*}
Then the Fourier spectrum of $\mu$ at $\theta$ is
\begin{equation*}
    \fs\mu = \sup\{ \beta\in\rd : \J_{\beta,\theta}(\mu)<\infty \}.
\end{equation*}
There is very little connection between the Fourier spectrum and the $L^q$-dimensions because they measure different things.  However, they both witness the $L^2$ dimension
\[
\tau_\mu(2) = \dim_\textrm{F}^1 \mu 
\]
in the case when $\mu$ is supported on a null set and it is also useful to keep in mind the bound
\[
\fd \mu \leq 2 \tau_\mu(\infty) 
\]
which always holds and relates the Frostman and Fourier dimensions.  This estimate is due to Mitsis \cite{Mit02}.

The Fourier spectrum has seen applications in the restriction problem \cite{CFdO25, FdO26+}, the Falconer distance problem \cite{Fra24, pham},  the dimension theory of orthogonal projections \cite{FdO24}, and the Fourier dimension of fractional Brownian motion \cite{LL25+}, among other problems.

\section{Main results} \label{sec:main}


In the following two theorems we obtain a restriction estimate for fractal measures in terms of their Fourier spectrum and $L^q$-dimensions; see also Corollary~\ref{cor:casetheta0} for a result in terms of the Fourier dimension and $L^q$-dimensions. These results unify and significantly extend key theorems from the literature.  For example,  setting $\theta = 0$ and $q= \infty$ we recover the Mockenhaupt--Mitsis--Bak--Seeger estimate \eqref{eq:SteinTomas}, and setting $\theta \in[0,1]$ and $q=\infty$ we recover, and provide a novel proof of, \cite[Theorem~3.1]{CFdO25}. 

Our proof proceeds via complex interpolation, making it closer to the proof of Stein (see e.g. \cite{Tom75b}).  In \cite{BS11}, Bak--Seeger remarked that it was not clear how to extend the complex interpolation approach to general fractal measures satisfying \eqref{eq:FrostmanExp} and \eqref{eq:FourierDecay} (that is, our setting with $q=\infty$ and $\theta=0$), see the discussion preceding \cite[Theorem 1.1]{BS11}. We propose a solution to  Bak and Seeger's problem by replacing  the \emph{ball} condition \eqref{eq:FrostmanExp} with the  \emph{potential} condition
  \begin{equation} \label{potentialcondition}
      \sup_{x\in\rd}\int \frac{d\mu(y)}{|x-y|^{\alpha}} < \infty.
  \end{equation}
In doing so we are able to prove an endpoint result for Frostman dimension via complex interpolation.  In particular, the Frostman dimension is equal to either the supremum of $\alpha$ satisfying \eqref{eq:FrostmanExp} or  \eqref{potentialcondition}, see Proposition \ref{prop:LqdimensionsNorm}.  In general \eqref{eq:FrostmanExp} and  \eqref{potentialcondition} are not equivalent, but \eqref{potentialcondition} does imply \eqref{eq:FrostmanExp}.

We first state the most general case which includes the endpoint estimate. We will see below in Proposition \ref{prop:LqdimensionsNorm} that condition \eqref{lqnormcondition} precisely characterises the $L^q$-dimensions of $\mu$, see Theorem \ref{thm:mainthm} below for a formulation which explicitly uses the $L^q$-dimensions.
\begin{thm}\label{thm:mainthmEndpoint}
  Let $0<\alpha<d$, $\theta\in[0,1]$, $q>1$, $\beta>d\theta$, and let $\mu$ be a non-zero, finite, compactly supported, Borel measure on $\rd$, such that  
  \begin{equation} \label{lqnormcondition}
      \int\bigg( \int \frac{d\mu(y)}{|x-y|^{\frac{d + \alpha(q-1)}{q}}} \bigg)^q \,dx < \infty,
  \end{equation}
  or, for $q=\infty$,
  \begin{equation*}
      \sup_{x\in\rd}\int \frac{d\mu(y)}{|x-y|^{\alpha}} < \infty,
  \end{equation*}
  and
  \begin{equation*}
    \mathcal{J}_{\beta,\theta}(\mu)<\infty.
  \end{equation*}
  Let
  \begin{equation*}
      p \geq 2\, \frac{2(q-1)(d-\alpha) + q(\beta -\theta d) }{(q - 1)(\beta - \theta\alpha)}.
  \end{equation*}
  Then, for all $f\in L^{2}(\mu)$,
  \begin{equation*}
      \| \widehat{f\mu} \|_{L^{p}(\rd)} \lesssim \| f \|_{L^2(\mu)}.
  \end{equation*}
  Equivalently, let
  \begin{equation*}
      1\leq p'\leq  \frac{4 (q-1)(d-\alpha) + 2 q(\beta -\theta d)}{4 (q-1)(d-\alpha)  + 2 q(\beta - \theta d) - (q-1) (\beta - \theta\alpha)}.
  \end{equation*}
  Then, for all $f\in L^{p'}(\rd)$,
  \begin{equation*}
      \| \widehat{f\,} \|_{L^2(\mu)} \lesssim \| f \|_{L^{p'}(\rd)}.
  \end{equation*}
\end{thm}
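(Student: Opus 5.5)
The plan is to prove the $TT^*$ form \eqref{eq:extension}: we show $\|\widehat{\mu}*f\|_{L^p}\lesssim\|f\|_{L^{p'}}$ at the endpoint value of $p$ and then interpolate with the trivial $L^1\to L^\infty$ bound to get the full range. Following Stein, I would embed the kernel $\widehat{\mu}$ in an analytic family. For $z$ in a vertical strip, let
\[
K_z(\xi)=\widehat{\mu}(\xi)\,|\xi|^{z}\,\psi(\xi)+\widehat{\mu}(\xi)(1-\psi(\xi)),
\]
where $\psi$ kills a neighbourhood of the origin. The low frequency piece is a bounded, compactly supported kernel, so it is harmless for every $p\geq2$ and can be dropped. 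On the spatial side, $\widecheck{K_z}$ is essentially $\mu*\gamma(z)|x|^{-d-z}$, i.e.\ a Riesz potential of $\mu$ of order $-z$, with $\gamma(z)$ the usual ratio of Gamma functions. I would multiply the family by $e^{z^2}$, or by a suitable reciprocal Gamma factor, so that the constants have admissible growth on vertical lines. We then interpolate between two lines $\mathrm{Re}\,z=-\gamma_0<0$ and $\mathrm{Re}\,z=z_1>0$, where $0$ is the interior point giving $\widehat{\mu}$ itself.

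\emph{Fourier side} ($\mathrm{Re}\,z=-\gamma_0$). For $\theta=0$ take $\gamma_0=\beta/2$. Then $|K_z|\lesssim|\widehat\mu||\xi|^{\beta/2}\lesssim \J_{\beta,0}(\mu)^{1/2}$, so convolution with $K_z$ is bounded $L^1\to L^\infty$. For $\theta>0$ take $\gamma_0=(\beta-\theta d)/2>0$, which is where $\beta>d\theta$ is used. Then $|K_z|^{2/\theta}=|\widehat\mu|^{2/\theta}|\xi|^{\beta/\theta-d}$, so $K_z\in L^{2/\theta}$ with norm $\J_{\beta,\theta}(\mu)^{1/2}$. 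Young's inequality then gives $L^{a}\to L^{a'}$ with $2/a=2-\theta/2$. This reproduces the $q=\infty$ result of \cite{CFdO25}.

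\emph{Spatial side} ($\mathrm{Re}\,z=z_1$). Choose $z_1=(d-\alpha)(q-1)/q$, so that $d-z_1=(d+\alpha(q-1))/q$. Then $\widecheck{K_z}=\gamma(z)\,\mu*|x|^{-(d-z)}$, whose modulus is dominated by $|\gamma(z)|$ times the function in \eqref{lqnormcondition}; hence $\|\widecheck{K_z}\|_{L^q}\lesssim|\gamma(z)|$. Since $\widehat{K_z*f}=\widecheck{K_z}\,\widehat f$ (up to reflection), Hausdorff--Young, H\"older and inverse Hausdorff--Young give $L^{r}\to L^{r'}$ with $r=2q/(q+1)$, i.e.\ $1/r-1/r'=1/q$. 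For $q=\infty$ this degenerates to an $L^2\to L^2$ bound, since the multiplier is then bounded by the potential condition. Stein's interpolation theorem at $z=0$ with $t=\gamma_0/(\gamma_0+z_1)$ gives $1/p=(1-t)\,\tfrac{1}{p_0}+t\,\tfrac{q-1}{2q}$, where $1/p_0$ is the Fourier-side exponent ($0$, or $1/a'$). For $\theta=0$ this yields $p=2\,\frac{2(q-1)(d-\alpha)+q\beta}{(q-1)\beta}$, exactly the stated endpoint. I expect the same bookkeeping with $\gamma_0=(\beta-\theta d)/2$ to produce the general formula. The equivalent restriction form follows by duality, as in Section~\ref{sec:restrictionIntro}.

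\emph{Expected main obstacle.} The delicate step is making the spatial-side identification rigorous for complex $z$. One has to check that the distributional inverse Fourier transform of $|\xi|^{z}\psi$ is $\gamma(z)|x|^{-d-z}$ plus a smooth, rapidly decaying error. This is fine for $0<\mathrm{Re}\,z<d$, but the error term must also be controlled in $L^q$ after convolving with $\mu$; I would use compact support and finiteness of $\mu$ for this. The growth of $\gamma(z)$ along vertical lines must also be admissible for Stein's theorem. I would handle both by first proving the estimate for $f$ Schwartz and replacing $|\xi|^z$ by the smooth truncated version above. The pole-free range $0<\mathrm{Re}\,z<d$ is guaranteed since $0<\alpha<d$. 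Analyticity of $z\mapsto\int (K_z*f)g$ then follows from dominated convergence.
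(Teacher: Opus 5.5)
Your proposal is correct and is essentially the paper's argument. The paper also applies Stein interpolation to the family $\widehat{\kappa_{s}*\mu}*f$ (your $K_z$ with $s=d-z$), between a Young's-inequality line controlled by $\J_{\beta,\theta}(\mu)$ and a Hausdorff--Young/H\"older line controlled by the $L^q$ potential bound, and evaluates it where $s=d$. Beyond that, you make explicit several points the paper leaves implicit: the Gamma factors, the low-frequency cutoff, and the passage from the endpoint to all larger $p$ via the trivial $L^1\to L^\infty$ bound.

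One correction is needed. The estimates you actually write on both lines correspond to $K_z=\widehat\mu\,|\xi|^{-z}\psi+\widehat\mu(1-\psi)$ with $\widecheck{K_z}\approx\gamma(z)\,\mu*|x|^{-(d-z)}$, so flip the sign of $z$ in your definition. As written, $|\xi|^{z}$ would put the non-integrable kernel $|x|^{-d-z_1}$ on the spatial line.
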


If we wish to formulate the previous theorem in terms of the $L^q$-dimensions $\tau_q(\mu)$ and Fourier spectrum $\fs \mu$, then we surrender the endpoint estimate and obtain the following.  

\begin{thm}\label{thm:mainthm}
  Let $\mu$ be a non-zero, finite, compactly supported, Borel measure on $\rd$. If for some $q\in(1,\infty]$ and $\theta \in [0,1]$ such that $\fs\mu >d\theta$,
  \begin{equation*}
      p> 2 \,  \frac{2(q-1)(d-\tau_{\mu}(q)) + q(\fs\mu -\theta d) }{(q - 1)(\fs\mu - \theta\tau_{\mu}(q))},
  \end{equation*}
  then, for all $f\in L^{2}(\mu)$,
  \begin{equation*}
      \| \widehat{f\mu} \|_{L^p(\rd)} \lesssim \| f \|_{L^2(\mu)}.
  \end{equation*}
  Equivalently, if 
  \begin{equation*}
      1\leq p' <\frac{4 (q-1)(d-\tau_{\mu}(q)) + 2 q(\fs\mu -\theta d)}{4 (q-1)(d-\tau_{\mu}(q))  + 2 q(\fs\mu - \theta d) - (q-1) (\fs\mu - \theta\tau_{\mu}(q))},
  \end{equation*}
  then, for all $f\in L^{p'}(\rd)$,
  \begin{equation*}
      \| \widehat{f\,} \|_{L^2(\mu)} \lesssim \| f \|_{L^{p'}(\rd)}.
  \end{equation*}
\end{thm}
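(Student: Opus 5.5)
The plan is to deduce Theorem~\ref{thm:mainthm} from the endpoint result, Theorem~\ref{thm:mainthmEndpoint}. We approximate $\tau_\mu(q)$ and $\fs\mu$ from below by admissible parameters $\alpha$ and $\beta$, and use the continuity of the exponent in $(\alpha,\beta)$. First we normalise: replacing $\mu$ by $\mu/\mu(\rd)$ changes neither $\tau_\mu(q)$ nor $\fs\mu$, and it rescales both sides of the estimate by constants. So we may assume $\mu$ is a probability measure, matching the definition \eqref{eq:defLq}.

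Fix $q\in(1,\infty]$ and $\theta\in[0,1]$ with $\fs\mu>d\theta$, and fix $p$ strictly larger than the stated threshold. Consider the function
\[
\Phi(\alpha,\beta)=2\,\frac{2(q-1)(d-\alpha)+q(\beta-\theta d)}{(q-1)(\beta-\theta\alpha)}
\]
(for $q=\infty$, read it as $2\,\frac{2(d-\alpha)+\beta-\theta d}{\beta-\theta\alpha}$). $\Phi$ is continuous on the region $\beta>\theta\alpha$. This region contains the point $(\tau_\mu(q),\fs\mu)$, because $\fs\mu>d\theta\geq\theta\tau_\mu(q)$. Hence we can choose $\alpha<\tau_\mu(q)$ and $\beta<\fs\mu$, arbitrarily close to these values, with $\beta>d\theta$, $0<\alpha<d$ and $\Phi(\alpha,\beta)\leq p$.

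There is a degenerate case when $\tau_\mu(q)=0$: then no $\alpha\in(0,\tau_\mu(q))$ exists. I expect this can be excluded, since $\tau_\mu(q)>0$ whenever the relevant range is nontrivial. Alternatively it can be treated directly, because $\Phi(\alpha,\beta)$ is continuous as $\alpha\downarrow 0$, so any small positive $\alpha$ nearly works. This is a point to check rather than a real obstacle.

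Next we verify the hypotheses of Theorem~\ref{thm:mainthmEndpoint} for this choice of $(\alpha,\beta)$.

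\emph{Fourier side.} Since $\beta<\fs\mu$, there is $\beta'\in(\beta,\fs\mu)$ with $\J_{\beta',\theta}(\mu)<\infty$. For $\theta>0$, on $|\xi|\geq1$ we have $|\xi|^{\beta/\theta-d}\leq|\xi|^{\beta'/\theta-d}$. On $|\xi|\leq 1$, the integrand is bounded by $\mu(\rd)^{2/\theta}|\xi|^{\beta/\theta-d}$, which is integrable because $\beta>0$. Hence $\J_{\beta,\theta}(\mu)<\infty$. The case $\theta=0$ is immediate from the supremum definition.

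\emph{Spatial side, $q<\infty$.} Here I invoke Proposition~\ref{prop:LqdimensionsNorm}, which characterises $\tau_\mu(q)$ as the supremum of those $\alpha$ for which \eqref{lqnormcondition} holds. Condition \eqref{lqnormcondition} holds for our $\alpha<\tau_\mu(q)$. To pass from the supremum to the particular $\alpha$, we need monotonicity in $\alpha$; if the proposition does not already give it, it follows by splitting the $y$-integral into $|x-y|\leq1$ and $|x-y|>1$.

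\emph{Spatial side, $q=\infty$.} We need the potential condition \eqref{potentialcondition} for $\alpha<\tau_\mu(\infty)=\frd\mu$. Pick $\alpha'\in(\alpha,\frd\mu)$ with $\mu(B(x,r))\lesssim r^{\alpha'}$. A dyadic decomposition then gives
\[
\int|x-y|^{-\alpha}\,d\mu(y)\lesssim \sum_{k\geq0} 2^{k\alpha}\,\mu\big(B(x,2^{-k})\big)+\mu(\rd)\lesssim \sum_{k\geq0} 2^{-k(\alpha'-\alpha)}+1,
\]
uniformly in $x$. The proposition also asserts this equivalence, so we may cite it instead.

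With both hypotheses in place, Theorem~\ref{thm:mainthmEndpoint} gives the extension estimate for every $p\geq\Phi(\alpha,\beta)$, and in particular for our $p$. The dual restriction formulation follows by the same duality as in Theorem~\ref{thm:mainthmEndpoint}: the bound on $p'$ is just the conjugate of the bound on $p$.

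The only substantive input is Proposition~\ref{prop:LqdimensionsNorm}. Matching the exponent $\frac{d+\alpha(q-1)}{q}$ in \eqref{lqnormcondition} with the box-counting definition of $\tau_\mu(q)$ is where the real work lies, but it is assumed here. Everything else is continuity and the elementary bookkeeping above.
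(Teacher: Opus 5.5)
Your proposal is correct and follows essentially the paper's argument: the paper likewise takes $d\theta<\beta<\fs\mu$ and $\alpha<\tau_\mu(q)$, obtains the spatial hypothesis from Proposition~\ref{prop:LqdimensionsNorm}, and applies Theorem~\ref{thm:mainthmEndpoint}; your continuity-of-$\Phi$ and monotonicity checks make explicit what the paper leaves implicit. One correction on the degenerate case: your fallback of using a small positive $\alpha$ when $\tau_\mu(q)=0$ cannot work, since by Proposition~\ref{prop:LqdimensionsNorm} no $\alpha>0$ then satisfies the spatial hypothesis; but the case never arises, because the Fourier spectrum is non-decreasing in $\theta$, so $\fs\mu>d\theta\geq0$ gives $\sd\mu>0$, hence $\tau_\mu(2)>0$, hence $\tau_\mu(\infty)\geq\tau_\mu(2)/2>0$ by \eqref{eq:boundLqdimensions}, and so $\tau_\mu(q)\geq\tau_\mu(\infty)>0$.
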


\begin{proof}
By definition of the Fourier spectrum, for all $\fs \mu> \beta> d\theta$,  
  \begin{equation*}
      \int_{\rd}\big| \widehat{\mu}(\xi) \big|^{\frac{2}{\theta}} |\xi|^{\frac{\beta}{\theta} - d} \,d\xi <\infty.
  \end{equation*}
  Moreover, by appealing to Proposition \ref{prop:LqdimensionsNorm} below,   for all $ \tau_\mu(q)> \alpha$,  
    \begin{equation*}
      \int\bigg( \int \frac{d\mu(y)}{|x-y|^{\frac{d + \alpha(q-1)}{q}}} \bigg)^q \,dx < \infty.
  \end{equation*}
  The desired estimates (without the endpoint) then follow immediately from Theorem \ref{thm:mainthmEndpoint}.
\end{proof}

In the previous theorem one can optimise over the parameters $q$ and $\theta$ to obtain the best range for $p$ by taking it to be
\begin{equation*}
    p >2\inf\limits_{\substack{\theta\in[0,1] \\ \fs\mu>d\theta\\q>1}} \frac{2(q-1)(d-\tau_{\mu}(q)) + q(\fs\mu -\theta d) }{(q - 1)(\fs\mu - \theta\tau_{\mu}(q))},
\end{equation*}
and analogously for $p'$ taking the supremum.

Setting $\theta = 0$ in Theorem~\ref{thm:mainthm} for measures with polynomial Fourier decay, we obtain the following restriction estimate involving the Fourier and $L^q$-dimensions only.  This result is aesthetically appealing, simple to state, and clearly   improves    the  Stein--Tomas estimate  \eqref{eq:SteinTomas}, see Corollary \ref{improve} below.  In particular, we recover Stein--Tomas when $q=\infty$ and recover \cite[Corollary~6.1]{CFdO25} when $q=2$.

\begin{cor}\label{cor:casetheta0}
  Let $\mu$ be a non-zero, finite, compactly supported, Borel measure on $\rd$ with $\fd\mu >0$. If, for some $q\in(1,\infty]$
  \begin{equation*}
      p>  \frac{2q}{q-1} + \frac{4(d-\tau_{\mu}(q))}{\fd\mu},
  \end{equation*}
  then for all $f\in L^2(\mu)$,
  \begin{equation*}
      \| \widehat{f\mu} \|_{L^p(\rd)} \lesssim \| f \|_{L^2(\mu)}.
  \end{equation*}
  Equivalently, if
  \begin{equation*}
      1\leq p' <2 \frac{2(q-1)( d- \tau_{\mu}(q)) + q\fd\mu}{4(q-1)(d-\tau_{\mu}(q)) + (q+1)\fd\mu},
  \end{equation*}
  then for all $f\in L^{p'}(\rd)$,
  \begin{equation*}
      \| \widehat{f\,} \|_{L^2(\mu)} \lesssim \| f \|_{L^{p'}(\rd)}.
  \end{equation*}
\end{cor}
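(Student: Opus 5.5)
Corollary~\ref{cor:casetheta0} should follow from Theorem~\ref{thm:mainthm} by specialising to $\theta=0$ and simplifying the exponent. The plan has three steps.

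\emph{Identify the relevant dimension at $\theta=0$.} By definition $\J_{\beta,0}(\mu)=\sup_\xi|\widehat{\mu}(\xi)|^2|\xi|^\beta$, so $\dim^0_{\mathrm F}\mu=\fd\mu$. The hypothesis $\fs\mu>d\theta$ then reads $\fd\mu>0$, which is exactly the assumption of the corollary.

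\emph{Simplify the range for $p$.} With $\theta=0$, Theorem~\ref{thm:mainthm} gives the estimate for
\begin{equation*}
p>2\,\frac{2(q-1)(d-\tau_\mu(q))+q\fd\mu}{(q-1)\fd\mu}=\frac{4(d-\tau_\mu(q))}{\fd\mu}+\frac{2q}{q-1},
\end{equation*}
which is the stated condition. For $q=\infty$ the convention $\frac{2q}{q-1}=2$ gives the limiting form, and the $q=\infty$ case of Theorem~\ref{thm:mainthm} yields $p>2+4(d-\tau_\mu(\infty))/\fd\mu$.

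\emph{Derive the dual range.} The restriction form follows by duality from $1/p'=1-1/p$. Write $A=2(q-1)(d-\tau_\mu(q))+q\fd\mu$ and $B=(q-1)\fd\mu$. Then $p>2A/B$ is equivalent to
\begin{equation*}
p'<\frac{2A}{2A-B}=\frac{2\big(2(q-1)(d-\tau_\mu(q))+q\fd\mu\big)}{4(q-1)(d-\tau_\mu(q))+(q+1)\fd\mu},
\end{equation*}
since $2A-B=4(q-1)(d-\tau_\mu(q))+(2q-q+1)\fd\mu$. This is the stated bound. Alternatively, one can substitute $\theta=0$ directly into the $p'$ formula of Theorem~\ref{thm:mainthm}.

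The only point needing care is the bookkeeping: checking that $2A-B>0$, which holds because $\tau_\mu(q)\le d$ and $(q+1)\fd\mu>0$, so the inequality direction is preserved when inverting, and handling $q=\infty$ by taking limits in the formulas. Neither step is a real obstacle.
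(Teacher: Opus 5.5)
Your proposal is correct and is essentially the paper's own argument: the corollary is obtained by setting $\theta=0$ in Theorem~\ref{thm:mainthm}, using that $\dim^0_{\mathrm F}\mu=\fd\mu$ by the definition of $\J_{\beta,0}(\mu)$. Your simplification of the $p$-range and your derivation of the dual $p'$-range (including the check that $2A-B>0$) are both right.
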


Next we consider precisely when Corollary \ref{cor:casetheta0} improves the Stein--Tomas estimate  \eqref{eq:SteinTomas}.

\begin{cor} \label{improve}
Let $\mu$ be a   non-zero, finite, compactly supported, Borel measure on $\rd$ with $\fd \mu >0$. The restriction estimate from  Corollary \ref{cor:casetheta0} is strictly better than the Stein--Tomas estimate  \eqref{eq:SteinTomas} whenever 
\begin{equation} \label{improveneed}
\tau_{\mu}(q) > \tau_\mu(\infty)+ \frac{\fd \mu }{2(q-1)}
\end{equation}
for some $q>1$.
\end{cor}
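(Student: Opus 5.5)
This is a direct comparison of thresholds. Write $\beta=\fd\mu$ and $\alpha_\infty=\tau_\mu(\infty)=\frd\mu$. The Stein--Tomas estimate \eqref{eq:SteinTomas}, optimised over admissible $\alpha<\alpha_\infty$ and $\beta'<\beta$, gives the open range
\begin{equation*}
p>2+\frac{4(d-\alpha_\infty)}{\beta}.
\end{equation*}
This is exactly the $q=\infty$ case of Corollary~\ref{cor:casetheta0}, since $\frac{2q}{q-1}\to 2$. Corollary~\ref{cor:casetheta0} at a finite $q>1$ gives the range
\begin{equation*}
p>\frac{2q}{q-1}+\frac{4(d-\tau_\mu(q))}{\beta}.
\end{equation*}
So ``strictly better'' means the second threshold is strictly smaller than the first. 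Any $p$ lying strictly between the two thresholds is then covered by our estimate but not by Stein--Tomas.

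To compare them, first use
\begin{equation*}
\frac{2q}{q-1}-2=\frac{2}{q-1}.
\end{equation*}
Then the inequality
\begin{equation*}
\frac{2q}{q-1}+\frac{4(d-\tau_\mu(q))}{\beta}<2+\frac{4(d-\alpha_\infty)}{\beta}
\end{equation*}
is equivalent to
\begin{equation*}
\frac{2}{q-1}<\frac{4(\tau_\mu(q)-\alpha_\infty)}{\beta}.
\end{equation*}
Multiplying through by $\beta/4>0$, which is allowed because $\fd\mu>0$ by hypothesis, this becomes exactly \eqref{improveneed}. Every step is reversible, so \eqref{improveneed} is in fact equivalent to the strict improvement, and the proof is this one-line computation.

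The only point needing care is bookkeeping about which range Stein--Tomas actually gives. It is stated with constants $\alpha,\beta$ satisfying \eqref{eq:FrostmanExp} and \eqref{eq:FourierDecay}, and the best such constants are the suprema $\frd\mu$ and $\fd\mu$. These suprema need not be attained, so the Stein--Tomas range in terms of dimensions is the open interval above. One could worry that the endpoint is included when the Frostman exponent is attained. In that case our threshold is still strictly smaller, so a nonempty interval of $p$ is gained either way. Finally, the identity $\frd\mu=\tau_\mu(\infty)$ from Section~\ref{sec:dimensions} is what lets us write the comparison with $\tau_\mu(\infty)$ in \eqref{improveneed}.
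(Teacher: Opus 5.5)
Your proof is correct and is the same argument the paper relies on. The paper states Corollary~\ref{improve} without a separate proof, as an immediate consequence of comparing the threshold $\frac{2q}{q-1}+\frac{4(d-\tau_\mu(q))}{\fd\mu}$ from Corollary~\ref{cor:casetheta0} with the Stein--Tomas threshold $2+\frac{4(d-\tau_\mu(\infty))}{\fd\mu}$, and your additional remarks on open versus closed ranges and on the reversibility of the computation are accurate.
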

The condition \eqref{improveneed} is rather mild for measures exhibiting multifractal behaviour.  In particular, the Fourier dimension is often very small (even zero for many self-similar measures) and in the multifractal setting $\tau_\mu(q)>\tau_\mu(\infty)$ holds.   The elementary upper bound \eqref{eq:boundLqdimensions} giving 
 \[
  \tau_{\mu}(q) \leq   \frac{q \tau_\mu(\infty)}{q-1}
 \]
is strictly larger than the right hand side of  \eqref{improveneed} whenever $\fd \mu < 2 \tau_\mu(\infty)$ and so there is space for improvement apart from in the extreme case $\fd \mu = 2 \tau_\mu(\infty)$, recalling that $\fd \mu \leq 2 \tau_\mu(\infty)$ always holds by the estimate due to Mitsis \cite{Mit02}.

A simple consequence of Corollary~\ref{improve} is that any measure for which our estimates do not improve on Stein--Tomas must satisfy 
  \begin{equation} \label{constrain}
     \tau_{\mu}(q) \leq \tau_{\mu}(\infty) +  \frac{\fd\mu}{2(q-1)}
  \end{equation}
  for all $q>1$.  In particular, this applies to  measures for which the Stein--Tomas estimate is sharp, e.g.~those defined in \cite{HL13, HL16,FHR25,FHR25+}.  One could also formulate bounds using Theorem~\ref{thm:mainthm} instead of Corollary~\ref{cor:casetheta0} but we leave these to the reader. The bound \eqref{constrain}  improves upon the elementary bound  \eqref{eq:boundLqdimensions} whenever $\fd \mu < 2 \tau_\mu(\infty)$ and recovers \eqref{eq:boundLqdimensions} in the extreme case when $\fd \mu = 2 \tau_\mu(\infty)$.
  
Given Corollary \ref{cor:casetheta0} and \eqref{constrain}, we are able to establish   new information  for \emph{all} measures.  That is, for all measures at least one of the following must hold:\begin{enumerate}
\item   Corollary \ref{cor:casetheta0} strictly improves the Stein--Tomas restriction estimate,
\item \eqref{constrain} strictly improves the bound \eqref{eq:boundLqdimensions} for $\tau_\mu(q)$  for all $q>1$,
\item $\fd \mu = 2 \tau_\mu(\infty)$.
\end{enumerate}

\section{Obtaining the $L^q$-dimensions from a convolution norm}\label{sec:LqdimensionsNorm}

To leverage the information provided by the $L^q$-dimensions, we use an alternative definition in terms of $L^q$ norms of certain convolutions, see Section~\ref{sec:dimensions} for the usual definition. There are many alternative formulations of the $L^q$-dimensions in the literature (see e.g.~\cite{Pes93}), but the one we present here seems to be new.  It is particularly suited to convolution problems and may be interesting in its own right.  Note that the energy we use in the formulation below is efficiently expressed as the $L^q$ norm of the convolution with an appropriate Riesz kernel, that is, 
 \[
     \| \kappa_{s_1}*\mu \|_{L^{q}(\rd)}^{q} = 
 \int\bigg( \int \frac{d\mu(y)}{|x-y|^{\frac{d + \alpha(q-1)}{q}}} \bigg)^q \,dx,
\]
for  $s_{1} := \frac{d+(q-1)\alpha}{q}$. 

\begin{prop}\label{prop:LqdimensionsNorm}
 Let $\mu$ be a compactly supported Borel probability measure on $\mathbb{R}^d$. Then for all $q\in(1,\infty)$
 \[
 \tau_\mu(q) = \sup\left\{\alpha \geq 0 : \int_{\rd} \Bigg( \int \frac{d\mu(y)}{|x-y|^{\frac{d + (q-1)\alpha}{q}}}\Bigg)^q \, dx < \infty \right\},
 \]
 and for $q=\infty$,
 \begin{equation*}
     \tau_{\mu}(\infty) = \sup\left\{\alpha \geq 0 : \sup_{x\in\rd}  \int \frac{d\mu(y)}{|x-y|^{\alpha}} < \infty \right\}.
 \end{equation*}
\end{prop}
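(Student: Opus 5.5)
We prove both inequalities of the claimed equality separately for $q\in(1,\infty)$, writing $s=s(\alpha)=\frac{d+(q-1)\alpha}{q}$ and $E_\alpha=\|\kappa_s*\mu\|_q^q$. Note $s<d$ exactly when $\alpha<d$. The key observation is a dyadic decomposition of the kernel:
\[
\kappa_s*\mu(x)\approx \sum_{k\ge 0} 2^{ks}\mu(B(x,2^{-k}))+O(1),
\]
valid since $\mu$ is compactly supported. For large $|x|$ the contribution is $\lesssim |x|^{-sq}$, which is integrable precisely when $sq>d$, i.e.\ $\alpha>0$. So only a bounded region matters.

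\textbf{Lower bound: $E_\alpha<\infty$ implies $\tau_\mu(q)\ge\alpha$.} Keep a single term $k$ of the dyadic sum. For any cube $Q\in\mathcal Q_r$ with $r=2^{-k}$ and any $x\in Q$, the ball $B(x,\sqrt d\, r)$ contains $Q$. Hence $\kappa_s*\mu(x)\gtrsim r^{-s}\mu(Q)$ on $Q$. Integrating over $Q$ and summing over cubes gives
\[
E_\alpha\gtrsim r^{d-sq}\sum_Q\mu(Q)^q=r^{-(q-1)\alpha}\sum_Q\mu(Q)^q .
\]
Thus $\sum_Q\mu(Q)^q\lesssim r^{(q-1)\alpha}$ for all small $r$, which gives $\tau_\mu(q)\ge\alpha$. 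Taking the supremum over such $\alpha$ yields $\tau_\mu(q)\ge\sup\{\alpha\}$.

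\textbf{Upper bound: $\alpha<\tau_\mu(q)$ implies $E_\alpha<\infty$.} Pick $\alpha<\alpha'<\tau_\mu(q)$, so that $\sum_{Q\in\mathcal Q_r}\mu(Q)^q\lesssim r^{(q-1)\alpha'}$ for all small $r$. On the bounded region, apply Minkowski's inequality in $L^q$ to the dyadic sum:
\[
\|\kappa_s*\mu\|_{L^q(B)}\lesssim \sum_k 2^{ks}\,\big\|\mu(B(\cdot,2^{-k}))\big\|_q .
\]
For $x\in Q\in\mathcal Q_{2^{-k}}$, the ball $B(x,2^{-k})$ is covered by the $3^d$ neighbouring cubes $Q'$ of $Q$. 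Therefore
\[
\int\mu(B(x,2^{-k}))^q\,dx\lesssim 2^{-kd}\sum_{Q}\mu(Q)^q\lesssim 2^{-kd-k(q-1)\alpha'} .
\]
The $k$-th term is then $\lesssim 2^{k(s-d/q-(q-1)\alpha'/q)}=2^{-k(q-1)(\alpha'-\alpha)/q}$. This is summable, so $E_\alpha<\infty$.

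\textbf{Case $q=\infty$.} If $\sup_x\int|x-y|^{-\alpha}d\mu(y)<\infty$, then $\mu(B(x,r))r^{-\alpha}$ is bounded, which gives the Frostman condition with exponent $\alpha$. Conversely, if $\mu(B(x,r))\lesssim r^{\alpha'}$ with $\alpha'>\alpha$, the dyadic sum $\sum_k 2^{k\alpha}2^{-k\alpha'}$ converges uniformly in $x$. So the supremum defining the right-hand side equals $\frd\mu$. It remains to check $\frd\mu=\tau_\mu(\infty)=\lim_{q\to\infty}\tau_\mu(q)$. One direction is \eqref{eq:boundLqdimensions}. For the other, if $\sum_Q\mu(Q)^q\lesssim r^{(q-1)\tau}$, then $\max_Q\mu(Q)\lesssim r^{(q-1)\tau/q}$, and letting $q\to\infty$ gives $\frd\mu\ge\lim_q\tau_\mu(q)$ up to $\epsilon$ losses. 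The fiddly point is making these $\epsilon$ losses uniform as $q\to\infty$, using monotonicity of $\tau_\mu$ and balls versus cubes via neighbours.

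\textbf{Main obstacle.} The delicate step is the upper bound. The balls-to-cubes comparison and the summability in $k$ must hold with a strict margin; the choice of $\alpha'$ supplies it. The tail region near infinity also has to be treated carefully when $\alpha$ is near $0$.
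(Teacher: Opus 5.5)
Your proposal is correct and is essentially the paper's argument: the same single-cube lower bound $\kappa_s*\mu\gtrsim r^{-s}\mu(Q)$ on each $Q$, the same $3^d$-neighbour averaging of $\mu(B(x,r))^q$ combined with Minkowski over dyadic shells (your margin $\alpha'>\alpha$ plays the role of the paper's $u<t$), and the same potential-versus-Frostman comparison at $q=\infty$; your explicit treatment of large $|x|$ via $sq>d$ also fills a point the paper passes over, since the pointwise bound $\lesssim 1$ for the far part does not by itself give a finite $L^q(\rd)$ norm. The extra step $\frd\mu=\lim_{q\to\infty}\tau_\mu(q)$, which the paper simply takes as known, needs no uniformity in $q$: for each fixed $q$ your argument gives $\frd\mu\ge\frac{q-1}{q}\tau_\mu(q)$ with harmless $q$-dependent constants, and you then let $q\to\infty$.
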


\begin{proof}
First define
\[
T(q) = \sup\left\{\alpha \geq 0 : \int_{\rd} \left( \int \frac{d\mu(y)}{|x-y|^{\frac{d + (q-1)\alpha}{q}}}\right)^q \, dx < \infty \right\}.
\]
Let $\alpha<T(q)$ and $r\in (0,1)$.  Write $Q_x$ for the $r$-cube from $\mathcal{Q}_r$ containing a point $x$.  If this is not uniquely defined, then choose one arbitrarily.  Then, writing $t=\frac{d + (q-1)\alpha}{q}$,
\begin{align*}
r^{-\alpha(q-1)}\sum_{Q \in \mathcal{Q}_r} \mu(Q)^q & = r^{-\alpha(q-1)}  \int_{\rd} \mu(Q_x)^q r^{-d} \, dx \\
& =   \int_{\rd} \left( \frac{\mu(Q_x)}{r^t}\right)^q  \, dx \\
& =   \int_{\rd} \left( \int_{Q_x}\frac{d \mu(y)}{r^t}\right)^q  \, dx \\
& \lesssim   \int_{\rd} \left( \int_{Q_x}\frac{d \mu(y)}{|x-y|^t}\right)^q  \, dx \\
& \leq   \int_{\rd} \left( \int\frac{d \mu(y)}{|x-y|^t}\right)^q  \, dx \\
&<\infty.
\end{align*}
Since this holds for all $r>0$, this proves that $\tau_\mu(q) \geq \alpha$ and therefore $\tau_\mu(q) \geq T(q)$.  

We now prove the other direction. Let $\alpha<\tau_\mu(q)$,  $r\in (0,1)$ and first observe that
\begin{equation} \label{lqest1}
\int_{\rd} \mu\big(B(x,r)\big)^q \, dx \lesssim \int_{\rd} \mu(Q_x)^q \, dx  = r^d\sum_{Q \in \mathcal{Q}_r} \mu(Q)^q \lesssim r^{d+\alpha(q-1)}.
\end{equation}
In the above we are not claiming that for all $x \in \rd$, the pointwise estimate $\mu\big(B(x,r)\big)  \lesssim   \mu(Q_x)$ holds.  Indeed, this is clearly false in general in cases where $\mu$ is not doubling.  However,  when one averages over $x$, the estimate is recovered.  More precisely,  write $\mathcal{Q}_r^x \subseteq \mathcal{Q}_r$ to be the collection of $3^d$ cubes either equal to or adjacent to $Q_x$.  Then 
\[
B(x,r) \subseteq \bigcup_{Q \in \mathcal{Q}_r^x} Q
\]
and
\begin{align*}
\int_{\rd} \mu\big(B(x,r)\big)^q \, dx &\leq 3^d \int_{\rd} \max_{Q \in \mathcal{Q}_r^x}\mu\big(Q\big)^q \, dx\\
&\leq 3^d\sum_{Q'} \mu(Q')^q \, \mathcal{L}^d\Big(\Big\{x : Q'= \argmax\limits_{Q \in \mathcal{Q}_r^x}\mu\big(Q\big) \Big\}\Big) \\
&\leq 3^d \sum_{Q'} \mu(Q')^q \, 3^d \, r^d\\
&= 9^d \int_{\rd} \mu(Q_x)^q \, dx,
\end{align*}
as required. Next, let $u <t=\frac{d + (q-1)\alpha}{q}$, and observe that, for a fixed $x\in\rd$,
\begin{align} 
\int\frac{d \mu(y)}{|x-y|^u} &= \int_{|x-y|>1} \frac{d \mu(y)}{|x-y|^u} \ + \  \sum_{k=0}^\infty \int_{2^{-k-1} < |x-y| \leq 2^{-k}}\frac{d \mu(y)}{|x-y|^u} \nonumber \\
&\lesssim 1 + \sum_{k=0}^\infty  2^{ku} \mu(B(x,2^{-k})).  \label{lqest2}
\end{align}
Finally, applying the triangle inequality for $L^q$ norms, \eqref{lqest1}, and \eqref{lqest2},
\begin{align*} 
\left(\int_{\rd} \left( \int\frac{d \mu(y)}{|x-y|^u}\right)^q \, dx\right)^{1/q} &\lesssim  1 + \sum_{k=0}^\infty  2^{ku} \bigg(\int_{\rd} \mu\big(B(x,2^{-k})\big)^q \, dx\bigg)^{1/q} \\
&\lesssim  1 + \sum_{k=0}^\infty  2^{ku} \Big(2^{-k(d+\alpha(q-1))}\Big)^{1/q}\\
&=1 + \sum_{k=0}^\infty  2^{k(u-t)} \\
&<\infty.
\end{align*}
Letting  $u \to t$, we get that $T(q) \geq \alpha$ and therefore $T(q) \geq \tau_\mu(q)$, completing the proof for $1<q<\infty$.

The case $q=\infty$ is easier to prove. Define
\begin{equation*}
    T(\infty) = \sup\left\{\alpha \geq 0 : \sup_{x\in\rd}  \int \frac{d\mu(y)}{|x-y|^{\alpha}} < \infty \right\}
\end{equation*}
and let $\alpha<T(\infty)$. For  $y\in B(x,r)$  we  have $1\leq r^{\alpha}|x-y|^{-\alpha}$ and, therefore,
\begin{equation*}
    \mu\big( B(x,r) \big) = \int_{B(x,r)}d\mu(y) \leq r^\alpha \int \frac{d\mu(y)}{|x-y|^{\alpha}} \lesssim r^{\alpha}.
\end{equation*}
This proves that $\tau_{\mu}(\infty)\geq \alpha$ and therefore $\tau_{\mu}(\infty)\geq T(\infty)$.

To prove the other direction let $u<\alpha<\tau_{\mu}(\infty)$. By the same argument as in \eqref{lqest2}, for all $x\in \rd$
\begin{equation*}
    \int \frac{d\mu(y)}{|x-y|^{u}} \lesssim 1 + \sum_{k=0}^{\infty} 2^{k(u-\alpha)} <\infty.
\end{equation*}
Letting $u\to \alpha$ we get that $T(\infty)\geq \alpha$ and thus $T(\infty)\geq\tau_{\mu}(\infty)$, as required.
\end{proof}

\section{Proof of Theorem \ref{thm:mainthmEndpoint}}

To prove the main theorems we will use the following lemma, known as Stein's complex interpolation theorem. We refer the reader to  \cite[Chapter~IX.1.2.5]{Ste93} for more background on this result.
\begin{lma}[Stein's complex interpolation theorem]\label{lma:SteinComplex}
  For $0\leq \Re(z)\leq 1$, let $T_{z}$ be a family of linear operators depending analytically on $z$. Let $1 \leq p_{0},p_{1}\leq \infty$, $1\leq q_{0},q_{1}\leq \infty$ and $M_{0},M_{1}>0$ be such that for $j=0,1$, and $\Re(z) = j$,
  \begin{equation*}
      \| T_{z}(f) \|_{L^{q_{j}}(\rd)} \leq M_{j}\| f \|_{L^{p_{j}}(\rd)},
  \end{equation*}
  where the constants $M_{j}$ grow slower than the double exponential on $\Im(z)$. Then for all $0\leq \lambda\leq 1$,
  \begin{equation*}
      \| T_{\lambda}(f) \|_{L^{q_{\lambda}}(\rd)} \leq M_{0}^{1-\lambda}M_{1}^{\lambda} \| f \|_{L^{p_{\lambda}}(\rd)},
  \end{equation*}
  where $\frac{1}{p_{\lambda}} = \frac{1-\lambda}{p_{0}} + \frac{\lambda}{p_{1}}$, and  $\frac{1}{q_{\lambda}} = \frac{1-\lambda}{q_{0}} + \frac{\lambda}{q_{1}}$.
\end{lma}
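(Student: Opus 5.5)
The plan is to follow the classical route: reduce to simple functions, use duality, and apply a three lines lemma to a suitable analytic function on the strip $S=\{0\leq \Re(z)\leq 1\}$. We read the statement with the standard implicit hypotheses of Stein's theorem. For simple $f,g$ with finite-measure support, $z\mapsto \int T_z(f)\,g$ is analytic in the interior of $S$, continuous on $S$, and of admissible growth, i.e.\ bounded by $\exp(c e^{a|\Im z|})$ with $a<\pi$. The constants $M_j$ are then also replaced by functions $M_j(\Im z)$ with $\log M_j(y)\lesssim e^{b|y|}$, $b<\pi$, and this is the meaning of ``slower than the double exponential''. Since the conclusion involves only $\lambda\in[0,1]$, there is no loss in treating $M_j$ as constants on the boundary after this reduction.

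Fix $\lambda\in(0,1)$; the cases $\lambda\in\{0,1\}$ are the hypotheses. By density and duality it suffices to show
\[
\Big|\int T_\lambda(f)\,g\,dx\Big|\leq M_0^{1-\lambda}M_1^\lambda
\]
for simple $f$ and $g$ with $\|f\|_{L^{p_\lambda}}=\|g\|_{L^{q_\lambda'}}=1$. When $q_\lambda=1$, or some exponent is $\infty$, the usual minor modifications apply: one uses duality against $L^\infty$ simple functions, or freezes the corresponding power. Write $f=\sum_k a_k e^{i\phi_k}\chi_{E_k}$ and $g=\sum_j b_j e^{i\psi_j}\chi_{F_j}$. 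Set
\[
\frac{1}{p(z)}=\frac{1-z}{p_0}+\frac{z}{p_1},\qquad \frac{1}{q'(z)}=\frac{1-z}{q_0'}+\frac{z}{q_1'} .
\]
Define
\[
f_z=\sum_k a_k^{p_\lambda/p(z)}e^{i\phi_k}\chi_{E_k},\qquad g_z=\sum_j b_j^{q_\lambda'/q'(z)}e^{i\psi_j}\chi_{F_j}.
\]
Then $f_\lambda=f$, $g_\lambda=g$, and on $\Re z=j$ one has $|f_z|^{p_j}=|f|^{p_\lambda}$ and $|g_z|^{q_j'}=|g|^{q_\lambda'}$. Hence $\|f_z\|_{L^{p_j}}=\|g_z\|_{L^{q_j'}}=1$ there.

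Next consider $F(z)=\int T_z(f_z)\,g_z\,dx$. By linearity of $T_z$,
\[
F(z)=\sum_{k,j}a_k^{p_\lambda/p(z)}b_j^{q_\lambda'/q'(z)}e^{i(\phi_k+\psi_j)}\int T_z(\chi_{E_k})\chi_{F_j}\,dx .
\]
This is a finite sum of products of entire exponentials with functions that are analytic and of admissible growth. So $F$ is analytic in the interior of $S$, continuous up to the boundary, and of admissible growth; the exponential factors are bounded on $S$. On the boundary lines, Hölder's inequality and the hypotheses give $|F(j+iy)|\leq M_j$.

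We then apply Hirschman's three lines lemma for functions of admissible growth. Its Poisson-kernel form states
\[
\log|F(\lambda)|\leq \int_{\R}\omega_0(\lambda,y)\log|F(iy)|\,dy+\int_{\R}\omega_1(\lambda,y)\log|F(1+iy)|\,dy,
\]
where the harmonic measures satisfy $\int\omega_0=1-\lambda$ and $\int\omega_1=\lambda$. This yields $|F(\lambda)|\leq M_0^{1-\lambda}M_1^\lambda$, and hence the claimed bound. The main obstacle is this step. The classical three lines lemma needs boundedness, whereas here only admissible growth is available, so Hirschman's lemma must be proved. To do so, map the strip to the disc, represent $\log|F|$ by subharmonicity through the Poisson integral on the circle, and verify that the growth bound $e^{a|y|}$ with $a<\pi$ is integrable against the Poisson kernel of the strip, which decays like $e^{-\pi|y|}$. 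This integrability is precisely where the sub-double-exponential growth assumption is used.
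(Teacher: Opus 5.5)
Your proposal is correct. It is the classical Stein--Hirschman argument: build analytic families $f_z,g_z$ of simple functions and apply the three lines lemma, under admissible growth, to $F(z)=\int T_z(f_z)g_z$. The paper gives no proof of its own and simply cites Stein's book, where this is the standard argument.

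You were right to make explicit the hypothesis that $z\mapsto\int T_z(f)g$ has admissible growth in the interior. The lemma is false without it, and the paper's phrasing only gestures at it. Since the $M_j$ are constants here, you could also avoid proving Hirschman's lemma. Multiply $F(z)M_0^{z-1}M_1^{-z}$ by $\exp(-\epsilon\cos(b(z-\tfrac12)))$ with $a<b<\pi$, apply the bounded three lines lemma, and let $\epsilon\to0$.
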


Now we proceed with the proof  of Theorem  \ref{thm:mainthmEndpoint}.  Let  $\theta\in[0,1]$, $q \in (1,\infty]$, $\beta>d\theta$, $0<\alpha<d$.  In the hypotheses of the theorem we assume that, for $s_{1} \coloneqq \frac{d+(q-1)\alpha}{q}$,
  \begin{equation} \label{lqassume}
     \| \kappa_{s_1}*\mu \|_{L^{q}(\rd)}^q= 
 \int\bigg( \int \frac{d\mu(y)}{|x-y|^{\frac{d + \alpha(q-1)}{q}}} \bigg)^q \,dx < \infty,
 \end{equation}
 where for $q=\infty$ we interpret  $s_{1}\coloneqq \alpha$ and assume
\begin{equation*}
    \|\kappa_{s_{1}}*\mu\|_{L^{\infty}(\rd)} = \sup_{x\in\rd} \int \frac{d\mu(y)}{|x-y|^{\alpha}} < \infty.
\end{equation*}
Then, for all $a,b\in\R$,
\begin{align*}
  \| \kappa_{a+ib}*\mu \|_{L^{q}(\rd)}^{q} &= \int_{\rd} \bigg| \int |x-y|^{-a} e^{-ib\log|x-y|} d\mu(y) \bigg|^q \,dx \\
  &\leq \int \bigg( \int \frac{d\mu(y)}{|x-y|^a} \bigg)^q \,dx\\
  & = \| \kappa_{a}*\mu \|_{L^{q}(\rd)}^{q}
\end{align*}
with an analogous estimate in the $q=\infty$ case.  In particular, for all $b\in\R$,
\begin{equation}\label{eq:kappaLq}
    \| \kappa_{s_{1}+ib}*\mu \|_{L^{q}(\rd)} \lesssim 1.
\end{equation}

Also,  in the hypotheses of the theorem we assume that
  \begin{equation}\label{fdassume}
   \big\| \reallywidehat{\kappa_{\frac{d(2-\theta)+\beta}{2}}*\mu} \big\|_{L^{2/\theta}(\rd)}^2 =    \J_{\beta,\theta}(\mu) <\infty.
  \end{equation}
Then, for all $a,b\in\R$,
\begin{align*}
    \| \reallywidehat{\kappa_{a + ib}*\mu} \|_{L^{2/\theta}(\rd)}^{2/\theta} &\leq \int_{\rd} \big| \widehat{\mu}(\xi) \big|^{\frac{2}{\theta}} |\xi|^{\frac{2(a-d)}{\theta}} \big| e^{i \frac{2}{\theta}b \log|\xi|} \big| \,d\xi\\
    & = \int_{\rd} \big| \widehat{\mu}(\xi) \big|^{\frac{2}{\theta}} |\xi|^{\frac{2(a-d)}{\theta}} \,d\xi \\
    &= \| \widehat{\kappa_{a}*\mu} \|_{L^{2/\theta}(\rd)}^{2/\theta}.
\end{align*}
Therefore,  for $s_{0}\coloneqq \frac{d(2-\theta) + \beta}{2}$, and all $b\in\R$,  
\begin{equation}\label{eq:kappaFS}
  \| \reallywidehat{\kappa_{s_{0} + ib}*\mu} \|_{L^{2/\theta}(\rd)}\lesssim 1,
\end{equation}
with the appropriate $L^\infty$ interpretation for $\theta=0$. 

For $z\in\C$ let $s(z) = s_{0}(1-z) + s_{1}z$ and define 
\[
T_{s(z)}f = \reallywidehat{\kappa_{s(z)}*\mu}*f.
\]
Let us first consider the case $z = 0 + ib$ for $b\in\R$, in which case $s(z) = s_{0}+ib(s_{1}-s_{0})$. By Young's inequality for convolutions and \eqref{eq:kappaFS}, if $1 + \frac{1}{r_{0}} = \frac{\theta}{2} + \frac{1}{p_{0}'}$, then
\begin{equation}\label{eq:interpolation0}
    \| T_{s(0 + ib)}f \|_{L^{r_{0}}(\rd)} \leq \| \reallywidehat{\kappa_{s_{0}+ib(s_{1}-s_{0})}*\mu} \|_{L^{2/\theta}(\rd)} \| f \|_{L^{p_{0}'}(\rd)} \lesssim \| f \|_{L^{p_{0}'}(\rd)}.
\end{equation}
Now consider $z = 1 + ib$ for $b\in\R$, in which case $s(z) = s_{1}+ib(s_{1}-s_{0})$. By the  Hausdorff--Young inequality and H\"older's inequality, if  $r_{1},p_{1}\geq2$ and $\frac{1}{q} + \frac{1}{r_{1}} = \frac{1}{p_{1}'}$,
\begin{align*}
    \| T_{s(z)}f \|_{L^{r_{1}}(\rd)} &= \| \reallywidehat{(\kappa_{s(z)}*\mu)\widecheck{f}} \|_{L^{r_{1}}(\rd)} \\
    &\leq \| (\kappa_{s(z)}*\mu)\widecheck{f} \|_{L^{r_{1}'}} \\
    &\leq \| \kappa_{s(z)}*\mu \|_{L^q(\rd)}\| \widecheck{f} \|_{L^{p_{1}}(\rd)} \\
    & \leq \| \kappa_{s(z)}*\mu \|_{L^q(\rd)}\| f \|_{L^{p_{1}'}(\rd)},\numberthis\label{eq:ineq2}
\end{align*}
which by \eqref{eq:kappaLq} gives
\begin{equation}\label{eq:interpolation1}
    \|T_{s(1 + ib)}f\|_{L^{r_{1}}(\rd)} \lesssim \|f\|_{L^{p_{1}'}(\rd)}.
\end{equation}
From \eqref{eq:interpolation0} and \eqref{eq:interpolation1} it follows from Stein's complex interpolation theorem, Lemma~\ref{lma:SteinComplex}, that for all $\lambda\in[0,1]$,
\begin{equation*}
    \| T_{s(\lambda)}f \|_{L^r(\rd)} \lesssim \| f \|_{L^{p'}(\rd)},
\end{equation*}
where
\begin{equation}\label{eq:equationsPR}
    \frac{1}{r} = \frac{1-\lambda}{r_{0}} + \frac{\lambda}{r_{1}} \quad \text{and}\quad \frac{1}{p'} = \frac{1-\lambda}{p_{0}'} + \frac{\lambda}{p_{1}'}.
\end{equation}
Let $\lambda = \frac{s_{0}-d}{s_{0}-s_{1}}$.  Observe that, since $\beta >d\theta$ and $\alpha<d$, we know $s_0>d>s_1$.  It follows that $\lambda\in(0,1)$.  Moreover,  $s(\lambda) = d$ and $T_{s(\lambda)}f = \widehat{\mu}*f$. Taking $r = p$, \eqref{eq:equationsPR} gives
\begin{equation*}
    p = 2\frac{q\beta - 2 \alpha (q - 1) - d (q\theta + 2 - 2 q)}{(\beta - \theta\alpha)(q - 1)},
\end{equation*}
which proves Theorem~\ref{thm:mainthmEndpoint}.

\section{Mandelbrot cascades}\label{sec:cascade}

Mandelbrot cascades are an important example in fractal geometry and probability theory, and were originally introduced as a fundamental model for fluid turbulence.  They are a family of random measures displaying stochastic self-similarity and have many interesting geometric properties.  In particular, they are often multifractal and the Fourier dimension was recently computed (independently) in \cite{suomalacascades,chenhan}.  These two properties make them ideal candidates for testing our theory (at least in the case $\theta=0$).  We briefly recall the definition.  Let $N \geq 2$ be an integer and $W \geq 0$ be a non-negative random variable with $\mathbb{E}(W) = 1$. For each integer $n \geq 1$, let $\mathcal{D}_n$ denote the collection of closed $N$-adic sub-cubes of $[0,1]^d$ of side-length $N^{-n}$ and to each $Q \in \cup_n \mathcal{D}_n$ associate an independent copy of $W$, denoted by $W_Q$.  Define a random density $\mu_n$ by 
\[
\mu_n(x) = \prod_{i=1}^n W_{Q_i}
\]
for $x \in Q_n \in \mathcal{D}_n$ and  $Q_j \in \mathcal{D}_j$ such that $Q_n \subseteq Q_j$ for each $i = 1, \dots, n$.  The measures associated to these densities converge weakly to a random limit $\mu$, which we call the Mandelbrot cascade.  The Mandelbrot cascade is called non-degenerate if $\mathbb{E} (W \log W) < d\log N$ and sub-exponential if for all $t >0$, $\mathbb{P}( W \geq t) \leq 2 \exp(-ct)$ for some $c$.  We quote two results from the literature in the non-degenerate sub-exponential cases; see \cite{Mol96, heurteaux} for the $L^q$-dimensions and \cite{suomalacascades,chenhan} for the Fourier dimension. For $p>0$ define
\begin{equation*}
    W_p:= \frac{W^p}{\mathbb{E}(W^p)},
\end{equation*}
and let $\lambda\in(0,\infty)$ be such that
\begin{equation*}
    \E(W_{\lambda}\log W_{\lambda}) = d\log N.
\end{equation*}
Then, almost surely conditioned on non-extinction for $q\neq1$:
\begin{equation*}
   \tau_{\mu}(q) = \begin{cases}
    d- \frac{\log \mathbb{E}(W^q)}{(q-1)\log N}\,, &\text{if }\E(W_q\log W_q) \leq d\log N;\\
    \frac{qd(\lambda-1)}{\lambda(q-1)} - \frac{q\log\E(W^\lambda)}{\lambda(q-1)\log N}\,, &\text{if }\E(W_{q}\log W_{q})>d\log N,
  \end{cases}
\end{equation*}
and for $q=1$
\begin{equation*}
    \tau_{\mu}(1) = 
    d - \frac{\mathbb{E}(W\log W)}{\log N}.
\end{equation*}
The Fourier dimension is given by
\[
\fd \mu = \min\{2, \tau_{\mu}(2)\}.
\]
In particular, if we are in the subcritical regime where $\mathbb{E}(W_2\log W_2) \leq d \log N$ and in the case $\tau_\mu(2) \leq 2$, then
\[
\tau_\mu(2) = \sd \mu = \fd \mu = \fs \mu =  d- \frac{\log \mathbb{E}(W^2)}{\log N}
\]
for all $\theta \in [0,1]$.

Applying our main result, Theorem~\ref{thm:mainthm}, in the case $\theta=0$ with the above in mind we get the following restriction estimate.

\begin{thm} \label{cascade}
Let $\mu$ be a non-degenerate, sub-exponential Mandelbrot cascade measure and $q \in (1, \infty]$ be arbitrary. Then, almost surely conditioned on non-extinction,  $\|\widehat{f\mu}\|_{L^p(\rd)}\lesssim \|f\|_{L^2(\mu)}$ for all $f \in L^2(\mu)$ whenever
\begin{equation*}
   p > \frac{2q}{q-1} +  \frac{4  \log \mathbb{E}(W^q)  }{ (q-1) (\log N) \min\{2, \tau_{\mu}(2)\}}
\end{equation*}
if $\E(W_{q}\log W_{q})\leq d\log N$ or 
\begin{equation*}
    p > \frac{2q}{q-1} +  \frac{4(d(q-\lambda)\log N + q\log \E(W^{\lambda}))}{\lambda(q-1)(\log N) \min\{ 2,\tau_{\mu}(2) \}}
\end{equation*}
if $\E(W_{q}\log W_{q})> d\log N$.

  In particular, if $\E(W_{2}\log W_{2})\leq d\log N$ and  $\tau_\mu(2) \leq 2$, then the range becomes
\[
p > \frac{2q}{q-1} +  \frac{4  \log \mathbb{E}(W^q)  }{\big(d \log N- \log \mathbb{E}(W^2) \big) (q-1) }
\]
and if   $\tau_{\mu}(2)\geq 2$, then the range becomes
\[
p > \frac{2q}{q-1} +  \frac{2  \log \mathbb{E}(W^q)  }{ (q-1) \log N}.
\]
\end{thm}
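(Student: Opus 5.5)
The plan is to specialise Corollary~\ref{cor:casetheta0} (that is, Theorem~\ref{thm:mainthm} with $\theta=0$) to the cascade measure, and then substitute the almost-sure formulae for $\tau_\mu(q)$ and $\fd\mu$ quoted above from \cite{Mol96, heurteaux, suomalacascades, chenhan}.

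First, I intersect the almost-sure events. Fix $q\in(1,\infty]$. Almost surely conditioned on non-extinction, three things hold: $\tau_\mu(q)$ is given by the displayed two-case formula, $\tau_\mu(2)$ is given by the same formula at $q=2$, and $\fd\mu=\min\{2,\tau_\mu(2)\}$. A countable intersection of full-measure events still has full measure. I will check that $\fd\mu>0$: non-degeneracy gives $\tau_\mu(1)>0$, and $\tau_\mu(2)>0$ should follow from the formula, either directly or via the sub-exponential hypothesis. This positivity is the one hypothesis-check I expect might need care. If $\tau_\mu(2)=0$ the bound is vacuous anyway, with $p>\infty$, so I would simply record that the statement is understood for $\fd\mu>0$.

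Next, I apply Corollary~\ref{cor:casetheta0} (dividing by $\mu([0,1]^d)$ if needed, since normalising does not change any of the dimensions). This gives the estimate for
\[
p>\frac{2q}{q-1}+\frac{4(d-\tau_\mu(q))}{\min\{2,\tau_\mu(2)\}}.
\]
In the case $\E(W_q\log W_q)\le d\log N$ we have $d-\tau_\mu(q)=\frac{\log\E(W^q)}{(q-1)\log N}$, which is the first range. In the other case I compute
\[
d-\tau_\mu(q)=\frac{d\lambda(q-1)-qd(\lambda-1)}{\lambda(q-1)}+\frac{q\log\E(W^\lambda)}{\lambda(q-1)\log N}=\frac{d(q-\lambda)\log N+q\log\E(W^\lambda)}{\lambda(q-1)\log N},
\]
which is the second range. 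For $q=\infty$, I would interpret the formula by the limit $\tau_\mu(\infty)=\lim_{q\to\infty}\tau_\mu(q)$, where the second case eventually applies, or else just note that the $q=\infty$ statement is Stein--Tomas.

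For the particular cases: if $\E(W_2\log W_2)\le d\log N$ and $\tau_\mu(2)\le 2$, then $\min\{2,\tau_\mu(2)\}=d-\frac{\log\E(W^2)}{\log N}$. Multiplying numerator and denominator by $\log N$ gives the stated range. If $\tau_\mu(2)\ge2$, the minimum is $2$ and the $4$ becomes $2$.

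These particular forms also use the first-case formula for $\tau_\mu(q)$, so the statement implicitly assumes $\E(W_q\log W_q)\le d\log N$ there; I would make this explicit. The main obstacle is bookkeeping rather than analysis: keeping track of which regime holds for $q$ and for $2$, and handling $q=\infty$ and the positivity of $\fd\mu$ correctly.
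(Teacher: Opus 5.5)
Your proposal is correct and is exactly the paper's argument. Like the paper, you substitute the quoted almost-sure formulae for $\tau_\mu(q)$ and $\fd\mu=\min\{2,\tau_\mu(2)\}$ into Corollary~\ref{cor:casetheta0} (Theorem~\ref{thm:mainthm} with $\theta=0$), and your computation of $d-\tau_\mu(q)$ in both regimes is right.

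The extra hypothesis $\E(W_q\log W_q)\le d\log N$ that you want to add to the particular cases is harmless but not needed. Set $T(s)=(s-1)d-\log_N\E(W^s)$, which is concave and satisfies $T'(\lambda)=T(\lambda)/\lambda$. Together these give $\tau_\mu(q)\ge T(q)/(q-1)=d-\frac{\log\E(W^q)}{(q-1)\log N}$ in both regimes. So the first-case range is always valid; it is merely not optimal in the supercritical regime.
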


The ranges given in the above theorem can be optimised by minimising the right-hand side as a function of the free parameter $q$. In the case $q=\infty$, the above reduces to the Stein--Tomas bound and it is of course interesting to consider when the bound is optimised for a different choice of $q$.  We will see that this happens quite easily.

The restriction problem for Mandelbrot cascades in the case $d=1$ was considered explicitly in \cite[Corollary 1.6]{chenhan}.  They proved that $\|\widehat{f\mu}\|_{L^p(\rd)}\lesssim \|f\|_{L^2(\mu)}$ for all $f \in L^2(\mu)$ whenever
\[
p >  \frac{4}{\fd \mu} =\frac{4}{\tau_{\mu}(2)},
\]
noting that $\tau_\mu(2) \leq d <2$ always holds in the case $d=1$. We note that this estimate is in general not as good as the Stein—Tomas estimate and this is because the optimal value of the Frostman dimension was not used. In the subcritical case  Theorem \ref{cascade} recovers this range in the case $q=2$ and in many cases will give a better bound for some $q>2$. On the other hand, in the supercritical case (that is, whenever $\E(W_{2}\log W_{2})> d\log N$), Theorem \ref{cascade} always gives a strictly better range, even in the case $q=2$.

We consider a simple explicit example, see Figure~\ref{fig:exampleCascade}.  Let $d=N=2$ and $W$ be defined by
\begin{equation}\label{eq:mandelbrotCascade}
  W =  \left\{ \begin{array}{c|c} 2 & \text{with probability 0.1} \\
1 & \text{with probability 0.8} \\
0 & \text{with probability 0.1} 
\end{array}\right.
\end{equation}
\begin{figure}
  \begin{subfigure}{0.45\textwidth}
    \includegraphics[width=0.9\textwidth]{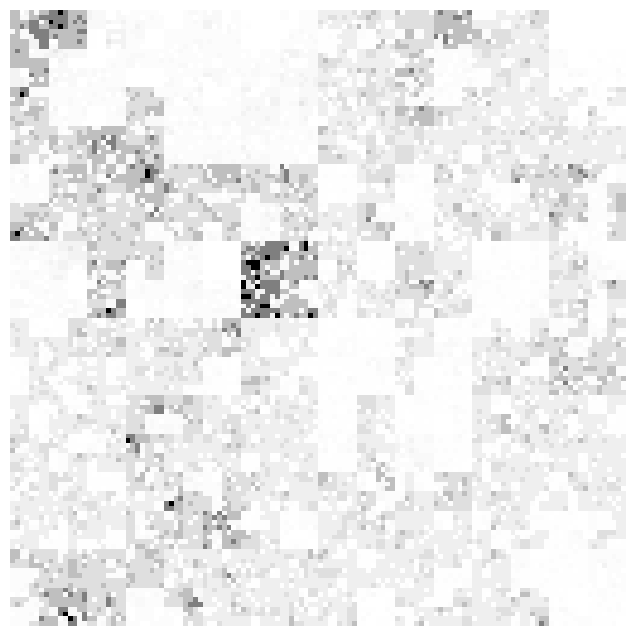}
  \end{subfigure}\hspace{5mm}%
  \begin{subfigure}{0.45\textwidth}
    \includegraphics[width=0.9\textwidth]{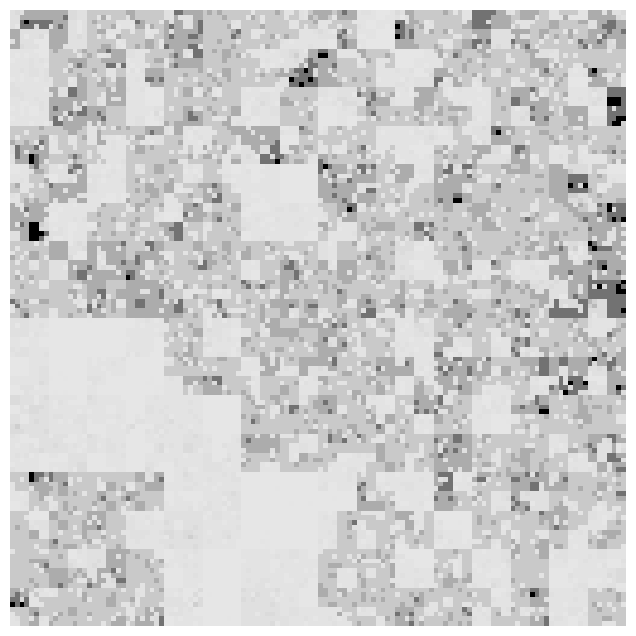}
  \end{subfigure}
  \caption{Two realisations of a Mandelbrot cascade defined with distribution \eqref{eq:mandelbrotCascade}.}\label{fig:exampleCascade}
\end{figure}
Then it is easy to check that $\mu$ is non-degenerate, sub-exponential, and  sub-critical and, moreover, $\mu$ is supported on a set of Hausdorff dimension $\approx 1.84\dots <d$.  Applying Theorem \ref{cascade}  we obtain the desired extension estimate for $p> 3.63\dots$ and   this optimal value is attained uniquely at $q \approx 4.28\dots$.  The bound at $q=2$ is $p> 4.60\dots$ and the bound at  $q=\infty$ is $p> 3.84\dots$, see Figure~\ref{fig:mandelbrotCascade}.

\begin{figure}[H]
  \includegraphics[scale=0.5]{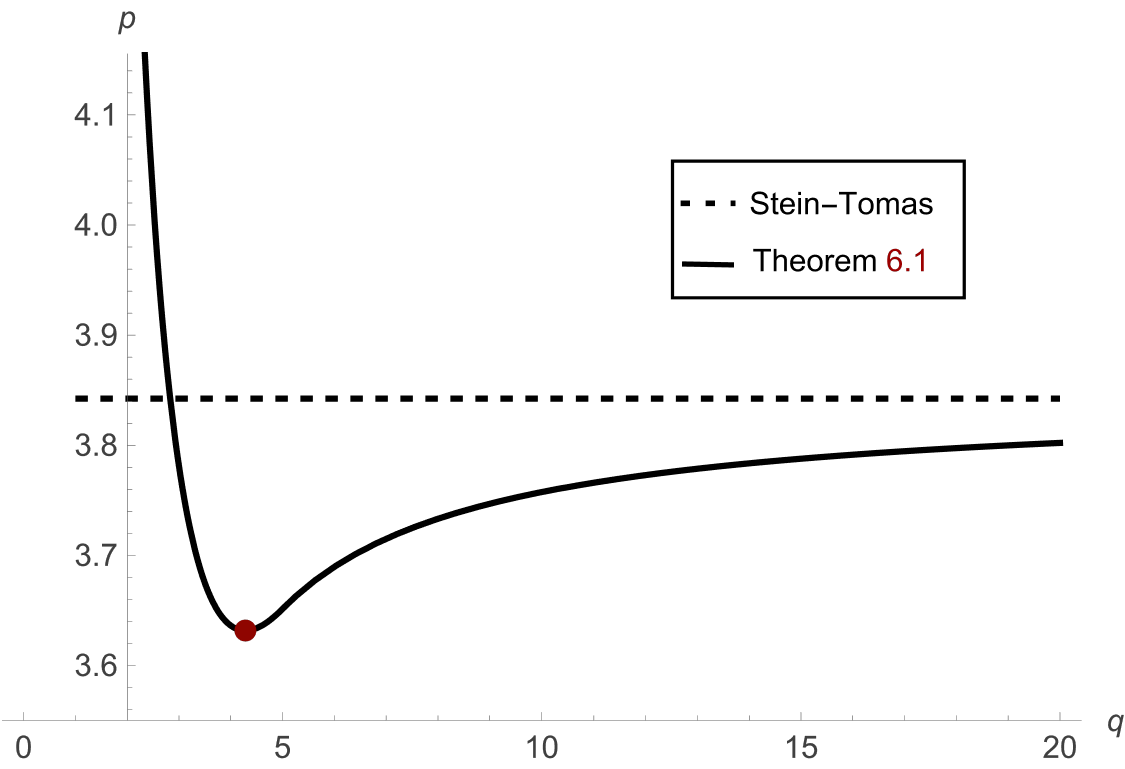}
  \caption{Plot of the lower bounds for $p$ for the extension estimate to hold for the Mandelbrot cascade with distribution \eqref{eq:mandelbrotCascade}. The dashed line is the Stein--Tomas lower bound and the solid line is the one given by Theorem~\ref{cascade} as a function of $q$. The minimum value is obtained at $q \approx 4.28\dots$.}\label{fig:mandelbrotCascade}
\end{figure}

Going further, we believe it is possible to define $W$ such that the range obtained in Theorem~\ref{cascade} is optimised for any given $q \in (1,\infty]$.  For example, let $N=2$, $c\in(0,1/N)$, $u\in(c,N^{d}]$, and $W$ be defined by
\begin{equation}\label{eq:defWcu}
  W =  \left\{ \begin{array}{c|c c} u & \text{with probability} &\frac{c}{u} \\
\frac{u(1-c)}{u-c} & \text{with probability} &\frac{u-c}{u}
\end{array}\right.
\end{equation}
for $d$ large enough to ensure $\tau_{\mu}(2)\geq 2$. Then the $q$ that optimises the bound in Theorem~\ref{cascade} is unique depending on $u$ and $c$. More precisely, for each $q>1$ there exists $c$ and large enough $u$ and $d$ such that the bound is optimal at $q$. See Figure~\ref{fig:examplecu} for an example where the bound can be optimised at $q=1.34$.
\begin{figure}[H]
  \centering
  \begin{subfigure}{0.45\textwidth}
    \includegraphics[width=\textwidth]{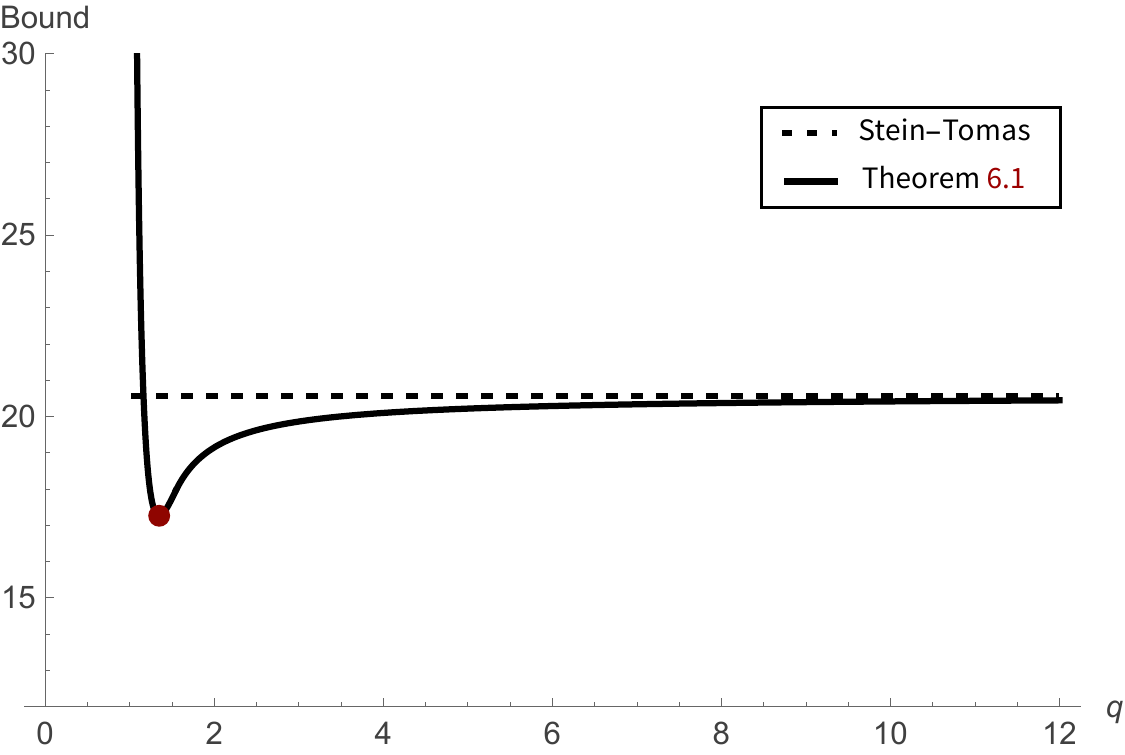}
  \end{subfigure}\hspace{8mm}%
  \begin{subfigure}{0.45\textwidth}
    \includegraphics[width=\textwidth]{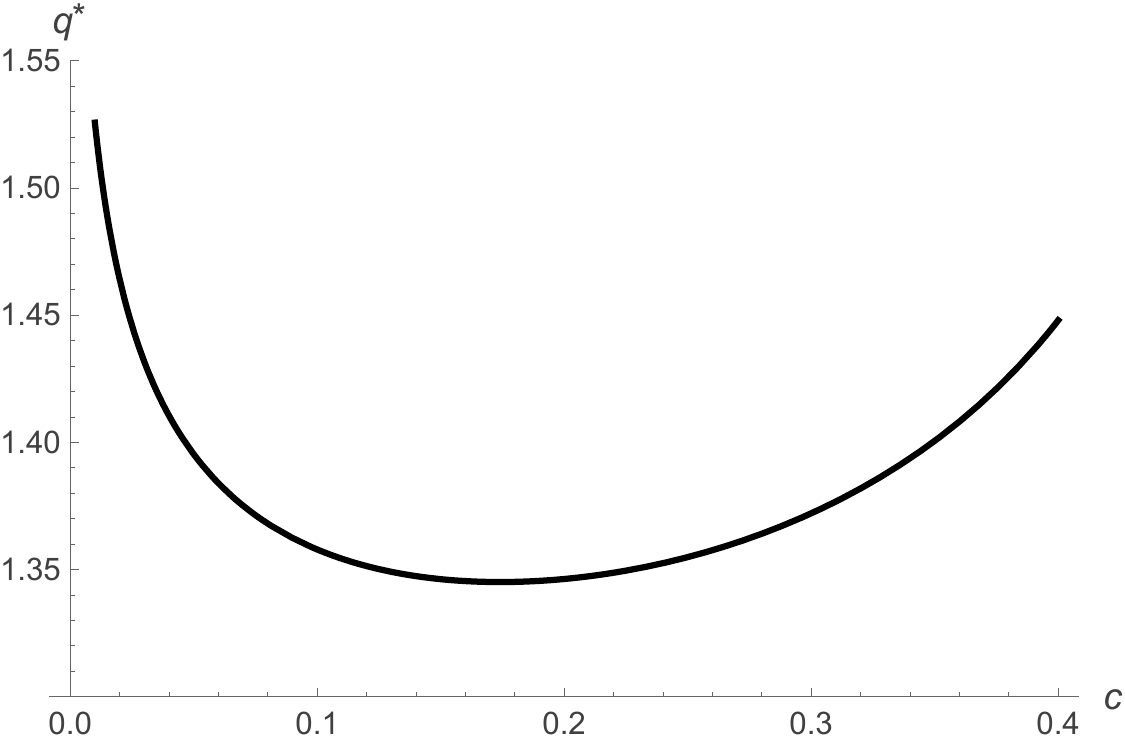}
  \end{subfigure}
  \caption{For $W$ as in \eqref{eq:defWcu}. Left: Bounds for $d = 11$, $N=2$, $c=0.22$ and $u=1000$. The minimum value is obtained at $q=1.34$. Right: Graph of the function which sends $c\in(0,0.4)$ to $q^{*}$, the value of $q$ which optimises the restriction estimate from Theorem~\ref{cascade}, for $d=11, N=2$, and $u=1000$.}\label{fig:examplecu}
\end{figure}

\end{document}